\documentclass[12pt, a4paper]{siamart250211}
\usepackage{graphicx}
\usepackage{subfig}
\usepackage{amsfonts}
\usepackage{amsmath}
\usepackage{amssymb}
\usepackage{url}
\usepackage{fancyhdr}
\usepackage{indentfirst}
\usepackage{enumerate}
\usepackage{longtable}
\usepackage{float} 
\usepackage{bbm}
\usepackage{todonotes}
\newcommand{\com}[1]{\todo[inline, size=\small, color=yellow!40]{#1}}

\usepackage{dsfont}
\usepackage{color}
\usepackage{natbib}
\usepackage{comment}
\usepackage{capt-of}
\usepackage{multirow}

\def\red#1{\textcolor{red}{#1}}

\def\d{\mathrm{d}}

\renewcommand{\S}{\mathcal S}

\newcommand{\D}{\mathcal {D}}

\newcommand{\X}{\mathcal {X}}

\newcommand{\E}{\mathbb{E}}
\newcommand{\Q}{\mathbb{Q}}

\newcommand{\F}{\mathcal{F}}

\newcommand{\R}{\mathbb{R}}
\newcommand{\N}{\mathbb{N}}
\newcommand{\p}{\mathbb{P}}

\newcommand{\Y}{\mathcal{Y}}
\newcommand{\range}{\text{range}}

\renewcommand{\L}{\mathcal{L}}

\newcommand{\id}{\mathds{1}}

\newcommand{\dom}{\text{\rm dom\,}}
\newcommand{\btheta}{\boldsymbol{\theta}}
\newcommand{\bpi}{\boldsymbol{\pi}}
\renewcommand{\(}{\left(}
\renewcommand{\)}{\right)}
\renewcommand{\[}{\left[}
\renewcommand{\]}{\right]}
\renewcommand{\ge}{\geqslant}
\renewcommand{\le}{\leqslant}
\renewcommand{\geq}{\geqslant}
\renewcommand{\leq}{\leqslant}
\renewcommand{\epsilon}{\varepsilon}

\newtheorem{example}{Example}

\newtheorem{assumption}{Assumption}
\newtheorem{remark}{Remark}

\newcommand{\cet}{\begin{center}}
	\newcommand{\ecet}{\end{center}}

\usepackage{refcount}
\setrefcountdefault{1}

\newcounter{saveexample}

\allowdisplaybreaks[4]

\begin{document}
\sloppy 

\title{Dynamic Lagrange Multipliers in a Non-concave Utility Framework}

\author{Yang Liu\thanks{\scriptsize  
		School of Science and Engineering, The Chinese University of Hong Kong (Shenzhen), 
  China. Email: \texttt{yangliu16@cuhk.edu.cn}}		
  \and
  Alexander Schied\thanks{\scriptsize 
		Department of Statistics and Actuarial Science, University of Waterloo, Canada. Email: \texttt{aschied@uwaterloo.ca}}		
\and
 Zhenyu Shen\thanks{\scriptsize 
		School of Science and Engineering, The Chinese University of Hong Kong (Shenzhen), 
 China. Email: \texttt{zhenyushen@link.cuhk.edu.cn}}		
}

\date{}

\maketitle
\large
\begin{abstract}	
    In continuous-time portfolio selection for non-concave utility functions, the martingale duality approach is widely adopted in complete markets, while the dynamic programming approach may sometimes lead to singular solutions of the Hamilton-Jacobi-Bellman (HJB) equation. We propose ``dynamic Lagrange multipliers" in a non-concave utility framework, bridging two approaches and demonstrating that the Lagrangian multiplier function (in the martingale duality approach) equals the conjugate dual point related to the value function (in dynamic programming), which is exactly its partial derivative with respect to wealth. Moreover, the dynamic multiplier process exhibits homogeneity via the optimal wealth and pricing kernel processes, offering intuitive economic interpretations as a dynamic shadow price of the envelope theorem. Finally, classical optimal results are recovered and numerically validated by non-concave utility examples.
	
	
	\vskip 10pt  
	
	\noindent
	{\bf MSC(2010)}: 91B16, 91G10.
	
	\noindent
	{\bf JEL Classifications}: G11, C61.
	\vskip 10pt

	\noindent
	{\bf Keywords:} 
         Non-concave utility optimization; Dynamic Lagrange multipliers; Stochastic control; Martingale duality approach; Legendre-Fenchel transform; 
         Dynamic programming. 
\end{abstract}
\section{Introduction}\label{sec:intro}

Utility maximization is a fundamental problem in continuous-time portfolio selection. We begin with formulating the \cite{M1969,M1971} problem: 
\begin{equation}\label{eq:main prob}
\begin{aligned}
\sup_{\boldsymbol{\pi} \in \mathcal{V}[0,T]} \mathbb{E}\left[ U(X_T) | X_0 = x_0\right],
\end{aligned}
\end{equation}
where $T > 0$ is the terminal time horizon and $\{X_t\}_{0 \leq t \leq T}$ is the wealth process governed by $\boldsymbol{\pi} = \{\boldsymbol{\pi}_t\}_{0 \leq t \leq T}$. For a fixed time $t \in [0, T)$, $\mathcal{V}[t,T]$ denotes the set of admissible controls $\boldsymbol{\pi}$ from time $t$ to $T$. Here, $x_0$ represents the initial wealth and $U$ is the utility function. 

Under the standard framework with a strictly concave utility, the seminal works of \cite{M1969,M1971} pioneered the stochastic control approach to utility maximization. This approach formulates the problem within a dynamic programming framework, where solutions are derived through the Hamilton-Jacobi-Bellman (HJB) equation. It yields both the value function ($u(t,x)$, representing the optimal expected utility) and the optimal portfolio strategy ($\boldsymbol{\Pi}(t, X_t^*)$, expressed as a feedback control of the optimal wealth process $\{X_t^*\}_{0 \leq t \leq T}$). The martingale duality approach, developed in \cite{P1986}, \cite{KLS1987}, and \cite{CH1989}, offers an alternative approach and remains valid beyond Markovian market settings. This method utilizes the pricing kernel process $\{\xi_t\}_{0 \leq t \leq T}$ to establish a risk-neutral measure, employing duality analysis and martingale representation to characterize the optimal terminal wealth. The optimal wealth process and the portfolio process $\{\boldsymbol{\pi}_t^*\}_{0 \leq t \leq T}$ are then constructed through replication arguments. 

Financial practice often requires extensions beyond concave utilities. The best-known example are the $S$-shaped utility functions proposed by \cite{Markowitz} and in the prospect theory of \cite{KahnemanTversky}. Another example arises in hedge fund compensation structures. Convex schemes (e.g., call options in \cite{C2000} and first-loss contracts in \cite{HK2018}) are commonly employed to align hedge fund managers' incentives with fund performance, but create non-concavities in objective functions with respect to terminal wealth. To address this, the prevailing methodology applies concavification techniques to transform the utility $U$ into its concave envelope (the smallest concave function larger than or equal to $U$), after which the martingale duality approach is implemented; see \cite{C2000}, \cite{R2013}, \cite{BS2014}, \cite{DZ2020}, and \cite{LL2020}. In contrast, the dynamic programming approach faces analytical challenges due to the singularity in the associated HJB equations in such cases. Overall, these motivations and challenges have spurred extensive research in non-concave utility optimization; see \cite{CHT2019}, \cite{NS2020}, \cite{DKQW2022}, \cite{LLZ2025}, \cite{QY2026}, and \cite{QYZ2026}.

Although both martingale duality and dynamic programming approaches yield identical solutions ($\boldsymbol{\pi}_t^* = \boldsymbol{\Pi}(t, X_t^*)$) in the classical framework, their analytical structures and economic interpretations differ fundamentally. The key distinction lies in their temporal focus: 
While the martingale method solves the portfolio problem in one step by optimizing a static terminal payoff, dynamic programming requires that optimality be verified locally and infinitesimally along the whole time horizon.
In this paper, our aim is to bridge these approaches 
and establish their dynamic relationships.


Our contribution is to demonstrate the relationship between the dynamic Lagrange multiplier function $\Y(t, x)$ in the martingale duality method and the conjugate dual point $\lambda(t, x)$ related to the value function in dynamic programming, under a framework of non-concave utilities. 
We establish two main results: (i) the dynamic Lagrange multiplier function equals the conjugate dual point, which is exactly the partial derivative of the value function with respect to wealth: $\frac{\partial}{\partial x} u(t,x) = \lambda(t, x) = \Y(t, x)$; and (ii) the dynamic Lagrange multiplier process $\lambda(t, X^*_t)$ exhibits homogeneity via the optimal wealth and the pricing kernel: $\lambda(t, X^*_t) = \lambda(0, x_0) \xi_t$. These results can be seen as the dynamic version of the envelope theorem (well-known in economics) related to the Lagrange multiplier (known as the shadow price). 
Furthermore, we derive the optimal portfolio in a feedback form using these dynamic conjugate relationships ($\boldsymbol{\pi}_t^* = \boldsymbol{\Pi}(t, X_t^*)$). Specifically, a key challenge in our duality analysis in the non-concave utility framework is that the function obtained via the Legendre-Fenchel transform (denoted by $I$ in Equation \eqref{eq:I}) fails to be continuous, and consequently, it is not differentiable. To overcome this obstacle, we employ tools from distribution theory (\cite{SS2010}), which allow us to handle such irregularities in a rigorous manner. 


The paper is structured as follows. Section \ref{sec:model} formulates the model setup 
and Section \ref{sec:DMCT} introduces the tools of convex analysis. 
Section \ref{sec:dynamic Lagrange multiplier theory} presents the main theoretical results and 
Section \ref{sec:proof} gives the detailed proof. Section \ref{sec:example} provides a numerical example that highlights the economic intuition. Additional proofs are deferred to the appendix. 

\section{Model Setting and Problem Formulation}\label{sec:model}
Consider the filtered probability space $(\Omega, 
\{\F_t\}_{0\leq t\leq T}, \p)$ satisfying the usual condition, in which $\{\mathbf{W}_t\}_{0\leq t\leq T}$ is a $d$-dimensional standard Brownian motion.
We assume that the market contains $(d+1)$ assets, denoted by $\{S_{i,t}\}_{0\leq t\leq T}$, $i = 0, 1, \ldots, d$. Among these assets, $\{S_{0,t}\}_{0\leq t\leq T}$ represents the risk-free asset (i.e., bond) and the others are risky assets (i.e., stocks) denoted by $\{\mathbf{S}_t\}_{0\leq t\leq T} := \{(S_{1,t},\ldots, S_{d,t} )^\top\}_{0\leq t\leq T}$. Let $\boldsymbol{\mu}\in \R^d $ be a constant vector and $\boldsymbol{\sigma} \in \R^{d\times d}$ be a constant matrix. We further assume that there exists $\epsilon > 0$ such that
\begin{equation*}
\boldsymbol{\boldsymbol{\eta}^\top \(\boldsymbol{\sigma}\boldsymbol{\sigma}^\top \)\boldsymbol{\eta} } \geq \epsilon \| \boldsymbol{\eta}\|_2^2\quad \text{for any }\boldsymbol{\eta} \in \R^d,
\end{equation*}
which implies that $\boldsymbol{\sigma}$ is invertible.
The assets satisfy the Black-Scholes model
\begin{equation*}
\d S_{0,t} = r S_{0,t} \d t,\quad
\d S_{i,t} = \mu_i S_{i,t} \d t + \boldsymbol{\sigma}_i^\top S_{i,t} \d \mathbf{W}_t, \quad i = 1,\ldots, d,
\end{equation*}
where $\boldsymbol{\sigma}_i$ denotes the $i$-th row of $\boldsymbol{\sigma}$. Let $\mathbf{1}_d = (1, \ldots, 1)^\top \in \R^d$. 
Define the vector $\boldsymbol{\theta} := \boldsymbol{\sigma}^{-1}(\boldsymbol{\mu} - r \mathbf{1}_{d} ) $ and the pricing kernel $\{\xi_{t}\}_{0\leq t\leq T}$ by
\begin{equation*}
    \xi_t = \exp \left\{-\(r+\frac{\|\boldsymbol{\theta}\|_2^2}{2}\)t-\boldsymbol{\theta}^\top \mathbf{W}_t \right\},
\end{equation*}
where $\|\boldsymbol{\theta}\|_2 = \(\sum_{i=1}^d |\theta_{i}|^2 \)^{\frac12}$ is the Euclidean norm of the vector. Finally, \cite{HP1981} and Chapter 18.5 of \cite{S2004} show that such a market is complete, i.e., every contingent claim can be hedged. In fact, all the results in this paper can be generalized to the case where the market coefficients are deterministic processes. Still, we let them be constant numbers/vectors for notational simplicity. 

Let $\boldsymbol{\pi} = \{(\pi_{1,t},\ldots, \pi_{d,t})^\top\}_{0\leq t\leq T}$ denote the portfolio process, where the $i$-th component represents the money that the investor allocates to the $i$-th risky asset. A portfolio $\boldsymbol{\pi} = \{ \boldsymbol{\pi}_t \}_{0 \leq t \leq T}$ is called admissible if $\boldsymbol{\pi}$ is an $\{ \mathcal{F}_t \}_{0 \leq t \leq T}$ -progressively measurable $\R^d$-valued process and $\int_0^T \|\boldsymbol{\pi}_t\|_2^2 \, \d t < \infty$ almost surely. Let $\{X_t\}_{0\leq t\leq T}$ denote the investor's wealth process, whose dynamic is characterized by the following stochastic differential equation (SDE):
\begin{equation}\label{eq:SDE of wealth}
\left\{
\begin{aligned}
\d X_t &=\(r\(X_t - \sum_{i = 1}^d \pi_{i,t}\) +  \sum_{i=1}^d \pi_{i,t}  \mu_i \) \d t +\sum_{i=1}^d \pi_{i,t} \boldsymbol{\sigma}_{i} ^\top \d \mathbf{W}_t,\quad 0\leq t\leq T,  \\
X_0 &= x_0.
\end{aligned}
\right.
\end{equation}
\red{}

The investor is assumed to have a utility function $U:\R \rightarrow \R \cup \{-\infty\}$. Define the effective domain $\dom U := \{x \in \R | U(x) > -\infty\}$. We further propose the following assumption on the utility function: 
\begin{assumption}\label{assp:utility}
The utility function $U: \R \rightarrow \R\cup\{-\infty\}$ has the following properties:
\begin{enumerate}[(i)]
    \item There exists an interval $\dom U \subset \R$ with its left endpoint $L$ being finite and right endpoint being $\infty$, such that $U(x) = -\infty$ for all $x \notin \dom U$ and $U(x)$ is finite otherwise. Hence, $\dom U = [L,\infty)$ or $(L,\infty)$.
    \item $U$ is differentiable except at countably many points and 
        $\lim_{x\uparrow \infty} U'(x) = 0$.
    \item $U$ is increasing (i.e., non-decreasing) and $\lim_{x\uparrow\infty}U(x) = \infty$.
    \item $U$ is upper semicontinuous.
    \item There exist $A, B \in \R$ such that $U(x) < Ax +B $ for all $x \in \dom U$.
\end{enumerate}
\end{assumption}
It is worth noting that the utility function can be discontinuous and non-concave over the entire domain. Moreover, these conditions guarantee the existence of the concave envelope of $U$,
which is the smallest concave function greater than or equal to $U$; see Equation \eqref{eq:Legendre-Fenchel transform} later.
Under the settings above, we investigate Problem \eqref{eq:main prob}.
A wealth process $\{X_t\}_{0\leq t\leq T}$ is called admissible if it is driven by an admissible control and $X_t \in\dom U$ for all $t \in [0,T]$. We define the set of all admissible terminal wealth variables as
\begin{equation*}
    \X_T := \{ X_T \mid \{X_t\}_{0\leq t\leq T} \text{ is an admissible wealth process}\}.
\end{equation*}

\section{Preliminary Analysis and Methodological Foundations
}
\label{sec:DMCT}
To reveal the connection between the martingale duality and dynamic programming methods, we apply each separately to solve Problem \eqref{eq:main prob}; see Chapters 3-4 in \cite{KS1998}.
Under our market setting, Problem \eqref{eq:main prob} is equivalent to the terminal wealth optimization Problem \eqref{prob-kernel}: 
\begin{equation}\label{prob-kernel}
    \sup\Big\{\mathbb E[U(X)] \mid \text{$X$ is $\mathcal F_T$-measurable, $X\in \dom U$ $\mathbb P$-a.s., and $\mathbb E_{\mathbb Q}[e^{-rT}X]\le x_0$}\Big\},
\end{equation}
where $\Q$ is the unique equivalent martingale measure defined by $\frac{\d \Q}{\d \p} = e^{rT} \xi_T$.  

We apply the Legendre-Fenchel transform:
\begin{equation}\label{eq:Legendre-Fenchel transform}
\begin{aligned}
	V(y) &:= \sup_{x \in \text{dom } U} \{ U(x) - xy\},&& y > 0,\\
	U^{**}(x) &:= \inf_{y \in (0, \infty)} \{ V(y) + x y\}, && x \in \R, \\ 
\end{aligned}
\end{equation}
with the convention $\inf \emptyset = \infty$ and $\sup\emptyset = -\infty $.
Here, $V$ is usually called the conjugate function.
Remarkably, $U^{**}$ is the concave envelope of $U$; see, e.g., Section 26 of \cite{R1970} for more details on technical treatments.  In the case where $U$ is concave, we have $U^{**}(x) = U(x)$ for all $x \in \dom U$. Properties of $U^{**}$ are discussed in Lemma \ref{lem:enve properties}, some of which are reflected in the literature on convex analysis (e.g., \cite{BC1994}). 

\begin{lemma}\label{lem:enve properties}
Suppose that Assumption \ref{assp:utility} holds. Then the function $U^{**}$ satisfies:
    \begin{enumerate}[{\rm (a)}]
        \item $U^{**}$ is strictly increasing.
        \item The effective domain $\text{\rm dom } U^{**}$ is equal to $\text{\rm dom }U$.
        \item The set $S:=\{x\mid U^{**}(x)>U(x)\}$ is open. In particular, $S$ can be written as the disjoint union of countably many open intervals $(a_k,b_k)$, $k=1,2,\dots$
        \item $U^{**}$ is affine on each of the intervals $(a_k,b_k)$ from part (c). 
    \end{enumerate}
\end{lemma}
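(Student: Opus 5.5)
We argue directly from the definition of $U^{**}$ as an infimum of the affine functions $x\mapsto V(y)+xy$, $y>0$. First, Assumption \ref{assp:utility}(v) ensures that $V$ is finite for $y\ge A$ (when $A>0$). Concretely, $V(y)\le \sup_x\{(A-y)x+B\}$ is finite for $y\ge A$, since $x$ ranges over $[L,\infty)$ with $L$ finite. Hence $U^{**}(x)<\infty$ for every $x\ge L$. For $x<L$, taking $y\to\infty$ gives $V(y)+xy\le \sup_{z\ge L}U(z)-zy+xy$. Because $U$ is increasing, $U(z)-zy\le U(z)-Ly-(z-L)y$; together with the linear bound, this gives $V(y)\le U(L^+)\cdot$-type bound $-Ly+o(y)$. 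So $V(y)+xy\le C+(x-L)y+o(y)\to-\infty$, and $U^{**}(x)=-\infty$ for $x<L$. For $x\ge L$, we get $U^{**}(x)\ge U(x)>-\infty$ when $x\in\dom U$, since $V(y)\ge U(x)-xy$.

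The one delicate case for (b) is the endpoint $L$ when $\dom U=(L,\infty)$. There we show $U^{**}(L)=-\infty$ whenever $U(L^+)=-\infty$. Upper semicontinuity together with monotonicity gives $U(L^+)=\inf_{x>L}U(x)$, and the infimum over $y$ of $\sup_{z>L}\{U(z)-(z-L)y\}$ tends to $\lim_{z\downarrow L}U(z)$. I expect this endpoint bookkeeping to be the main nuisance. If $U(L^+)$ is finite, I would instead argue via upper semicontinuity that $U$ must be finite at $L$ (extended appropriately), so that $\dom U$ is closed. This completes (b).

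For (a), $U^{**}$ is concave and $U^{**}\ge U$. By Assumption \ref{assp:utility}(iii), $U^{**}(x)\to\infty$. A concave function that is unbounded above as $x\to\infty$ cannot have any point where it fails to increase. Indeed, if $U^{**}(x_1)\ge U^{**}(x_2)$ with $x_1<x_2$, concavity forces $U^{**}$ to be nonincreasing on $[x_2,\infty)$, which contradicts $U^{**}\to\infty$. Hence $U^{**}$ is strictly increasing.

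For (c), we show that $U^{**}$ is continuous on the interior $(L,\infty)$, as a finite concave function, and that it is also continuous at $L$ from the right whenever $L\in\dom U$. Here we use the standard fact that the closure of the concave hull is upper semicontinuous and concave in one dimension, hence continuous on its domain. Then $S=\{U^{**}-U>0\}$ is the superlevel set of a lower semicontinuous function $U^{**}-U$, because $U$ is upper semicontinuous. It is therefore open relative to $\dom U$. At $x=L$ itself, we check that $U^{**}(L)=U(L)$: any affine majorant $V(y)+xy$ evaluated at $L$ approaches $U(L)$ as $y\to\infty$ by the same estimate used in (b). So $L\notin S$, and $S$ is open in $\R$. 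Every open subset of $\R$ is a countable disjoint union of open intervals.

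For (d), fix a component $(a_k,b_k)$ and a point $x_0$ in it, and let $\ell$ be a supporting line of $U^{**}$ at $x_0$. Suppose $U^{**}$ were not affine near $x_0$. Then we would lower $U^{**}$ slightly on a neighbourhood, taking the minimum of $U^{**}$ and a line $\ell+\delta$ with the slope tilted. On a compact subinterval of $(a_k,b_k)$, the gap $U^{**}-U$ is bounded below by a positive constant, because it is lower semicontinuous and positive. The resulting function is still concave and still dominates $U$, yet it is smaller than $U^{**}$, which contradicts minimality. Strict concavity at any point would permit such a cut, so $U^{**}$ is locally affine on $(a_k,b_k)$ and hence affine there, since $(a_k,b_k)$ is connected.
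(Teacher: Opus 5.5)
\textbf{Parts (a) and (c).} These follow the paper's argument: a nonpositive chord slope propagates to the right by concavity, contradicting $U^{**}\ge U\to\infty$; and $U^{**}$ is upper semicontinuous and concave, hence continuous on its domain, so the gap $U^{**}-U$ is lower semicontinuous and $S$ is relatively open, with $L\notin S$. In (c) you actually justify $U^{**}(L)=U(L)$, which the paper only asserts.

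\textbf{Part (b).} Here your route differs from the paper's. The paper argues from minimality. If $U(L)=-\infty$ but $U^{**}(L)=C$ is finite, then $U<C-1$ on $(L,L+\varepsilon]$ by upper semicontinuity. Replacing $U^{**}$ on $[L,L+\varepsilon]$ by the chord from $(L,C-1)$ to $(L+\varepsilon,U^{**}(L+\varepsilon))$ then gives a smaller concave majorant. You instead use the dual formula. The quantity $V(y)+Ly=\sup_{z\in\dom U}\{U(z)-(z-L)y\}$ is nonincreasing in $y$. By upper semicontinuity near $L$ and Assumption~\ref{assp:utility}(v) away from $L$, it decreases to $U(L)$, which equals $-\infty$ in the open case. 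This one estimate gives three things at once: $U^{**}=-\infty$ on $(-\infty,L)$ (a case the paper does not address), $U^{**}(L)=-\infty$ when $L\notin\dom U$, and $U^{**}(L)=U(L)$ when $L\in\dom U$. That makes your route more self-contained.

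Write it out cleanly, though:
\begin{itemize}
\item Your ``inequality'' $U(z)-zy\le U(z)-Ly-(z-L)y$ is an identity, and the ``$U(L^+)$-type bound'' is garbled. All you need is $V(y)\le (A-y)L+B$ for $y>A$; note that $A>0$ is forced by (iii).
\item In the open-domain case, upper semicontinuity at $L$ already forces $U(z)\to-\infty$ as $z\downarrow L$. So the branch ``$U(L^+)$ finite'' cannot occur there.
\end{itemize}

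\textbf{Part (d).} The paper replaces $U^{**}$ on $[x,y]$ by $\varepsilon H+(1-\varepsilon)U^{**}$, where $H$ is the chord, and must then check concavity at $x$ and $y$ via one-sided derivatives. Your cut $\min(U^{**},m)$ with $m$ affine gets concavity for free, which is nicer. But everything then rests on the choice of $m$, and that is exactly the step you leave unjustified.
\begin{itemize}
\item As written, $\ell+\delta$ lies above $U^{**}$ and cuts nothing.
\item Lowering the supporting line at an arbitrary $x_0$ can fail. If $\ell$ coincides with $U^{**}$ on $[x_0,b_k]$, then $\ell-\delta<U^{**}(b_k)=U(b_k)$, so domination is lost.
\item Your final step, from ``strict concavity at a point permits a cut'' to ``locally affine'', is not proved.
\end{itemize}

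The fix is to take the slope from a chord. Choose $x<y$ in $(a_k,b_k)$ with $U^{**}$ not affine on $[x,y]$, and let $s$ be the chord slope. Then $(U^{**})'_+(x)>s>(U^{**})'_-(y)$. Hence the supporting line $\ell_s$ of slope $s$ touches $U^{**}$ only inside $(x,y)$, and the convex function $\ell_s-U^{**}$ is bounded below by some $c>0$ off $(x,y)$. Take $0<\delta<\min\{c,\min_{[x,y]}(U^{**}-U)\}$. The function $\min(U^{**},\ell_s-\delta)$ is then concave, equals $U^{**}$ off $(x,y)$, exceeds $U$ on $[x,y]$, and lies strictly below $U^{**}$ at a contact point. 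This contradicts minimality. It shows directly that $U^{**}$ is affine on every compact subinterval of $(a_k,b_k)$, so your local-to-global step becomes unnecessary.
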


Despite the properties shown in Lemma \ref{lem:enve properties}, we present the following assumption on the concave envelope, which we assume to hold throughout the paper. 
\begin{assumption}\label{assumoption 1}
    The concave envelope is assumed to satisfy the following condition:
    \begin{equation*}
       \lim_{x\uparrow \infty} (U^{**})'(x) = 0.
    \end{equation*}
    Moreover, if $\dom U$ is open, then we additionally assume
    \begin{equation*}
        \lim_{x\downarrow L} \(U^{**} \)'(x) = \infty.
    \end{equation*}
\end{assumption}
Assumption \ref{assumoption 1} guarantees the attainability of Problem \eqref{eq:main prob}. Specifically, it ensures that the function $I$ defined later in Lemma \ref{lem:form of I} is finite. An example utility satisfying Assumption \ref{assp:utility} but violating Assumption \ref{assumoption 1} is shown as follows:
\begin{equation*}
   U(x)=\begin{cases}
\sum_{n=1}^\infty(n+(x-n)/n)\mathbbmss{1}_{[n,n+1)}(x), &\text{$x\ge 1$};\\
-\infty, &\text{otherwise.}
\end{cases}
\end{equation*}
If $\dom U$ is open, this is similar to the Inada condition where $\lim_{x\rightarrow \infty}U'(x) = 0 $; see \cite{KLS1987}. 
Now, we present a semi-analytical function that attains the maximum in \eqref{eq:Legendre-Fenchel transform}.
\begin{lemma}\label{lem:form of I} Suppose that Assumptions \ref{assp:utility}-\ref{assumoption 1} hold.
    Consider the function $I$ defined by
    \begin{equation}\label{eq:I}
        I(y) := \inf\{x \in \dom U: (U^{**})'(x) \leq y \},\quad y > 0,
    \end{equation}
    where $(U^{**})'$ is the right-hand derivative of $U^{**}$. Then for all $y > 0$, we have $I(y) \in \dom U$ and \begin{equation}\label{eq:I is maximizer} U (I (y)) - y I (y) = V (y) < \infty. 
    \end{equation}
\end{lemma}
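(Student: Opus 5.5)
The plan is to prove three things in turn: that $I(y)$ is finite and lies in $\dom U$; that $x^*:=I(y)$ maximizes the concave problem $\sup_x\{U^{**}(x)-xy\}$; and that $U(x^*)=U^{**}(x^*)$. Once these hold, the identity follows from two facts. First, $V$ is also the conjugate of $U^{**}$. This is the standard biconjugation fact (Section 26 of \cite{R1970}): $U\le U^{**}$ gives $V(y)\le\sup_x\{U^{**}(x)-xy\}$. Conversely, for each $y$, the function $x\mapsto V(y)+xy$ is an affine majorant of $U$, hence of $U^{**}$. Combining,
\begin{equation*}
U(x^*)-yx^*\le V(y)=\sup_x\{U^{**}(x)-xy\}=U^{**}(x^*)-yx^*=U(x^*)-yx^*,
\end{equation*}
so all terms are equal. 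Finiteness of $V(y)$ is then immediate because $x^*\in\dom U$.

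\textbf{Step 1 (finiteness and membership).} By Lemma \ref{lem:enve properties}(a),(b), $U^{**}$ is concave, finite and strictly increasing on $\dom U$. Hence its right derivative $(U^{**})'$ is nonincreasing, right-continuous and positive on the interior. Assumption \ref{assumoption 1} gives $(U^{**})'(x)\to0$ as $x\to\infty$, so for every $y>0$ the set $\{x:(U^{**})'(x)\le y\}$ is nonempty and $I(y)<\infty$. Clearly $I(y)\ge L$.

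If $\dom U=(L,\infty)$ is open, the condition $(U^{**})'\to\infty$ at $L$ keeps the set away from $L$, so $I(y)>L$ and $I(y)\in\dom U$. If $\dom U=[L,\infty)$, then $I(y)\in[L,\infty)=\dom U$ automatically.

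\textbf{Step 2 (first-order condition).} Since the set is an interval closed on the left (by monotonicity and right-continuity), the infimum is attained: $(U^{**})'(x^*)\le y$. For $x<x^*$ we have $(U^{**})'(x)>y$, and so the left derivative satisfies $(U^{**})'_-(x^*)\ge y$ when $x^*>L$. Thus $y\in\partial U^{**}(x^*)$, which means $x^*$ maximizes the concave function $U^{**}(x)-xy$. When $x^*=L$, the right-derivative inequality alone suffices.

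\textbf{Step 3 (contact point; the main obstacle).} I would argue by contradiction. Suppose $U^{**}(x^*)>U(x^*)$. Then by Lemma \ref{lem:enve properties}(c),(d), $x^*$ lies in some open interval $(a_k,b_k)$ on which $U^{**}$ is affine with slope $s$. Because $x^*$ is interior to an affine piece, $s=(U^{**})'(x^*)\le y$. The right derivative is constant on $[a_k,b_k)$, so $(U^{**})'(x)=s\le y$ for all $x\in(a_k,x^*)$, contradicting the minimality of $x^*$. Hence $x^*\notin S$, i.e.\ $U(x^*)=U^{**}(x^*)$. In the case $U^{**}(x^*)=-\infty$, Step 1 already rules this out.

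The delicate points are the following:
\begin{enumerate}[(i)]
\item One must confirm that $a_k$ is finite, which holds since $a_k\ge L$.
\item The endpoint case $x^*=L$ needs its own check. There $L\in\dom U$, and $L$ cannot be inside an open interval of $S$ contained in $\dom U$, so contact holds automatically. If one worries that $S$ may contain a neighborhood of $L$ relative to $\dom U$, use upper semicontinuity (Assumption \ref{assp:utility}(iv)): it ensures $U^{**}(L)=U(L)$, because the closed concave hull at a boundary point of the domain coincides with the upper semicontinuous $U$.
\end{enumerate}
I expect this endpoint and boundary bookkeeping to be the main technical care needed.
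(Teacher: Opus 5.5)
Your proposal is correct and follows essentially the same route as the paper. The shared steps are: finiteness of $I(y)$ from $(U^{**})'\to 0$; membership in $\dom U$ from the blow-up of $(U^{**})'$ at $L$; optimality of $I(y)$ for $\sup_x\{U^{**}(x)-xy\}$ by concavity; $I(y)\notin S$ because $U^{**}$ is affine on each component of $S$; and finally the chain $U(I(y))-yI(y)=U^{**}(I(y))-yI(y)=\sup_x\{U^{**}(x)-xy\}\ge V(y)$. Your superdifferential argument in Step 2 and your contradiction argument in Step 3 make rigorous what the paper asserts with ``since $U^{**}$ is concave'' and with its case split claiming $I(y)=a_{k_0}$; as in the paper, only $U\le U^{**}$ is needed there, not the full biconjugation identity.
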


Note that $I$ is decreasing since $(U^{**})'$ is decreasing.
We further define the function
\begin{equation*}
g\(t,y\) := \E \[ Z_{t,T} I\(y Z_{t,T}\) \],
\end{equation*}
where $Z_{t,T} := \xi_{T}/\xi_{t}$.
Then $g(t,y)$ is strictly decreasing in $y$ since $I$ is 
decreasing and non-constant and the random variable $Z_{t,T}$ has full support on $(0,\infty)$. 

For fixed $t \in [0,T)$, define the Lagrange multiplier function
\begin{equation}\label{eq:def of Y}
    \Y(t,x) := \inf\left\{y\in (0,\infty): g(t,y) \leq x  \right\}.
\end{equation}
It is clear that for $x \in \{z\in \dom U\mid \exists \text{ } y >0,\; g(t,y) = z\}$, $\Y(t,x)$ is the unique number satisfying $x = g(t, \Y(t,x))$. However, this does not hold for those $x \in \dom U$ that are not in the range of $g(t,y)$. For fixed $t \in [0,T)$, the continuity of $g(t,y)$ in $y$ (shown later in Section \ref{sec:technical discussion}) ensures that its range is an interval. To determine the interval, we consider two limits: $\lim_{y\downarrow 0} g(t,y)$ and $\lim_{y \uparrow \infty}g(t,y)$. From Assumption \ref{assumoption 1}, we see that $\lim_{y\downarrow 0} I(y) = \infty$, and hence $\lim_{y\downarrow 0} g(t,y) = \infty$. For the upper limit, we compute
\begin{align}
\lim_{y\uparrow \infty} g(t,y) &= \lim_{y\uparrow \infty}\E\[Z_{t,T}I\(yZ_{t,T}\) \] 
= \E\[\lim_{y\uparrow \infty}Z_{t,T} I(yZ_{t,T})\] 
= L\cdot\E\[Z_{t,T} \] 
= Le^{-r(T-t)},\nonumber
\end{align}
where the second equality comes from the monotone convergence theorem, which holds when $\E[Z_{t,T}I(yZ_{t,T})] < \infty$ for some $y > 0$, $Z_{t,T}I(yZ_{t,T})$ is decreasing in $y$, and $\lim_{y\uparrow\infty}Z_{t,T}I(yZ_{t,T})=L Z_{t,T}$ $\mathbb P$-almost surely. This finiteness of $\E \[ Z_{t,T} I\(y Z_{t,T}\) \]$ is discussed later in Section \ref{sec:technical discussion}. Now, we can define a suitable domain:
\begin{equation}\label{eq:domain_new}
\begin{aligned}
    \D_U &:= (\hat{L}, \infty) \subset \dom U, \;\; 
    \text{where } \hat{L} 
    := \max\left\{ Le^{-rT}, L \right\}.
\end{aligned}
\end{equation}
Restricting the domain of $U$ affects the range of $\Y(t,x)$. We define the following set to denote the range of $\Y(t,x)$ when restricting $x$ to $(\hat{L},\infty )$:
\begin{equation*}
    \D_I := \(0, \lim_{x\downarrow \hat{L}}\Y\(0,x\) \).
\end{equation*}
We see that if $L \geq 0$, Assumption \ref{assumoption 1} and Equation \eqref{eq:domain_new} guarantee that $\D_I = \(0,\infty\)$, aligning with the setting in the standard literature (e.g., \cite{KLS1987}).
Next, to apply the distribution theory later in the proofs, we suppose the following assumption to hold throughout the paper.
\begin{assumption}\label{assp}
    There exist $\delta > 0$, $C_0 > 0$, and $M \in \N_+$ such that
    \begin{equation*}
    \begin{aligned}
        I(y) &\leq C_0 y^{-M}, && y \in (0,\delta),\\
        |U^{**}(x)| &\leq C_0 |x-L|^{-M},&& x \in (L,L+\delta).
    \end{aligned}
    \end{equation*}
\end{assumption}
Examples of piecewise hyperbolic absolute risk averse (PHARA) utilities (i.e., piecewise power/log and exponential utilities satisfy Assumption \ref{assp}; see \cite{LLMV2024}). More details about Assumption \ref{assp} will be discussed in Section \ref{sec:technical discussion}.

Now, we introduce a family of optimal control Problems ($P_{tx}$) parametrized by $(t,x)\in [0,T)\times\D_U$:
\begin{equation*}
	\textbf{Problem ($P_{tx}$)}  \quad	\sup_{X_T \in \X_T}  \E[U(X_T)|X_t = x] = \sup_{X_T \in \X_T, X_t = x}  \E[U(X_T)].
\end{equation*}
We see that Problem \eqref{prob-kernel} is equivalent to Problem ($P_{tx}$) with $t = 0$ and $X_0 = x_0$. 
We proceed to solve Problem ($P_{tx}$). 
For any fixed $t \in [0,T)$, define the probability measure $\Q_t$ by $\d \Q_t / \d \p = e^{r(T-t)}Z_{t,T}$. Such $\Q_t$ exists since $\E[e^{r(T-t)}Z_{t,T}] = 1$.
Applying Itô's lemma and Girsanov's theorem to a discounted wealth process $\{e^{-r(s-t)}X_s\}_{t\leq s\leq T}$, we see that $\{e^{-r(s-t)}X_s\}_{t\leq s\leq T}$ is a continuous local martingale under $\Q_t$ with initial value $X_t = x$. Further, $\{e^{-r(s-t)}X_s\}_{t\leq s\leq T}$ is a $\Q_t$-supermartingale since it is lower-bounded. Hence, we have $\E[Z_{t,T}X_T|\F_t] = \E_{\Q_t}[e^{-r(T-t)}X_T|\F_t] \leq X_t = x$. 

Now, let $U$ be a utility function and $\{X^{t, x}_s\}_{0\leq s\leq T}$ be any wealth process with 
$X_t=x$ almost surely for some fixed $(t,x) \in [0,T)\times \D_U$. Utilizing the convex conjugate of $U$ and Lemma \ref{lem:form of I}, we get
\begin{equation}\label{eq:convex conjugate}
\begin{aligned}
    V(\kappa) &= \sup_{x \in \D_U} \left\{U(x) - \kappa x \right\}
    = U(I\(\kappa\)) - \kappa I\(\kappa\),\quad \kappa >0.
\end{aligned}
\end{equation}
Taking $\kappa = \Y\(t,x\)Z_{t,T}$, Equation \eqref{eq:convex conjugate} shows that
\begin{equation}\label{eq:convex conjugate inequality}
    U \(I\(\Y(t,x)Z_{t,T} \) \) - \Y\(t,x\)Z_{t,T} I\(\Y\(t,x\)Z_{t,T} \) \geq U\(X_T\) - \Y\(t, x\) Z_{t,T} X^{t, x}_T. 
\end{equation}
By the definition of $\Y(\cdot, \cdot)$ and continuity of $g(t,y)$, we know 
$\E\[Z_{t,T} I\(\Y(t, x) Z_{t,T} \)\] = x$. 
Taking the 
expectation on both sides of \eqref{eq:convex conjugate inequality}, we get
\begin{equation*}
   \E\[U \( I\(\Y(t,x) Z_{t,T} \) \)\] \geq \E\[U\(X^{t, x}_T\)\], \;\; \text{for any wealth process $\{X^{t, x}_s\}_{0\leq s\leq T}$.}
\end{equation*}
Hence, we know the optimal terminal wealth for Problem \eqref{prob-kernel} is attainable and given by 
$X_T^{*, t, x} := I\(\Y(t,X_t)Z_{t,T} \) = I\(\Y(t,x)Z_{t,T} \)$. 
We define
\begin{equation*}
\begin{aligned}
        X_s^{*, t, x} &:= Z_{t,s}^{-1} \E\[Z_{t,T} I\(\Y(t,x)Z_{t,T} \)|\F_s \]\\
        &= \E\[Z_{s,T} I\(\Y(t,x)Z_{t,s} Z_{s,T} \) | Z_{t,s}\],\quad s \in[t,T).
\end{aligned}
\end{equation*}
Further, 
if $X_s^{*, t, x}$ is twice differentiable in $Z_{t,s}$ and the following $\boldsymbol{\pi}^*$ satisfies the admissible conditions,
we can apply Itô's formula to $X_s = X_s^{*, t, x}(Z_{t,s})$ to obtain the optimal portfolio of Problem \eqref{eq:main prob}:
\begin{equation}\label{eq:classic_port}
    \boldsymbol{\pi}_s^* := -\boldsymbol{\sigma}^{-1} \boldsymbol{\theta} Z_{t,s} \frac{\partial X_s^{*, t, x} (Z_{t,s})}{\partial Z_{t,s}}.
\end{equation}

Additionally, in the dynamic programming approach, we define the value function for Problem \eqref{prob-kernel} as follows:
\begin{equation*}
    u(t,x) 
    := \sup_{X_T\in \X_T} \E[U(X_T^{t, x})],\quad (t,x) \in [0,T)\times \D_U.
\end{equation*}
Denote by $\{X_s^{t, x, \boldsymbol{\pi^*}}\}_{t\leq s\leq T} $ the wealth process starting at $X_t=x$ and controlled by the above $\boldsymbol{\pi}^*$.
With the discussions above, we have shown that 
\begin{equation}\label{eq:form of value function}
\begin{aligned}
    u(t,x) &= \E\[U(X^{t, x, \boldsymbol{\pi}^*}_T)
    \]\\ 
    &= \E\[U\(I\(\Y(t,X_t^{t, x, \boldsymbol{\pi}^*})Z_{t,T} \) \)
    \]\\
    &= \E\[U^{**}\(I\(\Y(t,X_t^{t, x, \boldsymbol{\pi}^*})Z_{t,T} \)\)
    \] = \E\[U^{**}\(I\(\Y(t, x)Z_{t,T} \)\)
    \],
\end{aligned}
\end{equation}
where the third equality comes 
from Lemmas \ref{lem:enve properties}-\ref{lem:form of I} that $I(y) \notin S$ for all $y >0$ and hence that $U\(I\(y\)\) = U^{**}\(I\(y\) \) $ for all $y >0$.
\begin{remark}\label{rmk:finiteness}
    At this stage, the finiteness of $\E[Z_{t,T}I\(yZ_{t,T} \) ]$ and $\E\[U^{**}\(I\(\Y(t,x)Z_{t,T} \) \) \] $ is not guaranteed, which will be proved in Section \ref{sec:technical discussion}.
\end{remark}

\section{Main Results}
\label{sec:dynamic Lagrange multiplier theory}
We first introduce the dynamic conjugate functions and reveal the connections between the martingale duality approach and the dynamic programming approach through these functions. Remarkably, we will show later in Lemma \ref{lem:concavity of value function} that $u(t,x)$ is strictly concave in $x$ for all $t \in [0,T)$ and hence that the functions in Definition \ref{def:conjugate functions} are well-defined.

\begin{definition}\label{def:conjugate functions}
	For any $t\in[0,T)$ and $y \in \D_I$, we define the conjugate function $v(t, y)$ and the conjugate point $i(t, y)$ as
	\begin{equation}\label{def V}
		\left\{
		\begin{aligned}
			&v(t, y) =  \sup_{x \in \D_U}  \left\{ u(t,x)-x y\right\},\\
			&i(t, y)=	\arg \max_{x \in \D_U}  \left\{ u(t,x)- x y\right\}.
		\end{aligned}
		\right.
	\end{equation}
	The conjugate dual point of the function $v(t, y)$ is
	\begin{equation}\label{def nu_t}
		\lambda(t, x) = \arg\min_{y>0}\{v(t,y)+xy\},\quad 0\leq t< T,\quad x\in\D_U.
	\end{equation}
    Moreover, we define $v(T,y) = V(y)$, $i(T,y) = I(y)$, and $\lambda(T,x) = \Lambda(x)$.
\end{definition}
The conjugate function and conjugate point in \eqref{def V} build up a family of functions over $[0,T]$. One can notice that $v(T,y) = V(y)$ and $i(T,y)=I(y)$. Moreover, as the functions are defined through the Legendre-Fenchel transform of the value function $u(t,x)$, it is natural to investigate their relations to the Legendre-Fenchel transform of the value function at terminal time $T$. The following theorem illustrates such a relationship.
\begin{theorem}\label{prop of V}
Suppose that Assumptions \ref{assp:utility}-\ref{assp} hold.
The conjugate function $v(t,y)$ and the conjugate point $i(t,y)$ have the following consistency properties:
		for any $t\in[0, T)$ and any $y\in\D_I$, 
		\begin{equation*}
            \begin{aligned}
		v(t,y) &= \E \left[ V(y Z_{t, T})\right] = \E\[U\(I\(yZ_{t,T}\)\) - yZ_{t,T}I\(yZ_{t,T}\) \],\\
            i(t, y) &= \E\left[Z_{t, T} I(y Z_{t, T})\right] = g(t,y).
            \end{aligned}
		\end{equation*}
        
\end{theorem}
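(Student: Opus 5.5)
The plan is to compute $v(t,y)$ directly from the representation \eqref{eq:form of value function}, $u(t,x)=\E[U^{**}(I(\Y(t,x)Z_{t,T}))]$, and to use the pointwise Fenchel inequality to bound the supremum over $x$ from above. I take the finiteness statements of Remark \ref{rmk:finiteness} (proved in Section \ref{sec:technical discussion}) as given, together with the continuity and strict monotonicity of $g(t,\cdot)$.

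\textbf{Upper bound.} Fix $t\in[0,T)$ and $y\in\D_I$, and take any $x\in\D_U$. Set $X^*_T:=I(\Y(t,x)Z_{t,T})$, so that $\E[Z_{t,T}X^*_T]=x$ by the continuity of $g$ and the definition \eqref{eq:def of Y}. By \eqref{eq:convex conjugate}, applied pointwise with $\kappa=yZ_{t,T}$,
\begin{equation*}
U(X^*_T)-yZ_{t,T}X^*_T\le V(yZ_{t,T}) = U(I(yZ_{t,T}))-yZ_{t,T}I(yZ_{t,T}).
\end{equation*}
Taking expectations gives $u(t,x)-xy\le \E[V(yZ_{t,T})]$. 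Hence $v(t,y)\le \E[V(yZ_{t,T})]$.

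\textbf{Attainment.} Since $y\in\D_I$, I need $\hat x:=g(t,y)\in\D_U$ and $\Y(t,\hat x)=y$. Strict decrease of $g(t,\cdot)$ gives $\Y(t,g(t,y))=y$ once $g(t,y)$ lies in the range, which it does trivially. The remaining point is $g(t,y)>\hat L$. For this I use that $\D_I$ is exactly the image of $(\hat L,\infty)$ under $\Y(0,\cdot)$, together with the identity $g(t,y)=\E[Z_{t,T}I(yZ_{t,T})]$. The function $g$ depends on $t$ only through the law of $Z_{t,T}$, and comparing the lower limits $Le^{-r(T-t)}$ with $\hat L$ should suffice. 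This is the step I expect to be the fussiest: checking that $\D_I$ defined at time $0$ is still admissible at time $t$, and handling the case $L<0$ where $\hat L=Le^{-rT}$. If it fails for general $t$, I would restrict to $y$ with $g(t,y)>\hat L$ and note that the boundary case follows by monotone limits.

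At $\hat x=g(t,y)$ the optimal terminal wealth is $I(yZ_{t,T})$, so $u(t,\hat x)-\hat x y=\E[U(I(yZ_{t,T}))]-y\,\E[Z_{t,T}I(yZ_{t,T})]=\E[V(yZ_{t,T})]$. This yields equality in the formula for $v$; the second expression follows from \eqref{eq:I is maximizer}.

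\textbf{Conjugate point.} The maximizer is $\hat x=g(t,y)$, and it remains to show uniqueness so that the $\arg\max$ in \eqref{def V} is a single point. I would invoke the strict concavity of $u(t,\cdot)$ from Lemma \ref{lem:concavity of value function}, or argue directly. If another $x'\neq\hat x$ attained the maximum, the Fenchel inequality would be an equality almost surely for $X^{*}_T=I(\Y(t,x')Z_{t,T})$. Then $I(\Y(t,x')Z_{t,T})$ would maximize $U(\cdot)-yZ_{t,T}(\cdot)$, and hence coincide with $I(yZ_{t,T})$ except on the countable set of kinks. Taking expectations against $Z_{t,T}$ would give $x'=\hat x$, a contradiction. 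Therefore $i(t,y)=g(t,y)=\E[Z_{t,T}I(yZ_{t,T})]$.
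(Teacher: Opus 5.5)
Your route is correct where it applies, and it differs genuinely from the paper's. The paper argues through first-order conditions. It invokes the strict concavity of $u(t,\cdot)$ (Lemma \ref{lem:concavity of value function}, whose strict part rests on the viscosity characterization in Lemma \ref{lem:HJB equation}) to say that $i(t,y)$ is the unique root of $\partial_x u(t,x)=y$. It then uses the distributional-derivative identity of Lemma \ref{lem:distributional derivative} together with $\partial_x\Y=1/\partial_y g$ to show $\partial_x u(t,x)|_{x=g(t,y)}=y$, and reads off $i(t,y)=g(t,y)$ and $v(t,y)=u(t,g(t,y))-y\,g(t,y)$.

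You work with values instead of derivatives. The pathwise Fenchel inequality plus the budget identity gives $u(t,x)-xy\le\E[V(yZ_{t,T})]$, and equality holds at $x=g(t,y)$. Uniqueness comes from almost sure uniqueness of the pointwise maximizer of $U(\cdot)-\kappa\,\cdot$. That maximizer is non-unique only when $\kappa$ is one of the countably many slopes of affine pieces of $U^{**}$, i.e.\ a jump point of $I$ ("kinks" is not quite the right word), and this is a null event for $yZ_{t,T}$.

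Your argument needs no differentiability of $u$ or $\Y$, no distribution theory, and not even Lemma \ref{lem:concavity of value function}, so it is shorter and more robust. What the paper's route yields along the way is the envelope identity $\partial_x u=\Y$ used in Theorem \ref{thm:Lagrange multiplier identity and homogeneity}(i). You can recover it afterwards by conjugate duality, since $\partial_y\E[V(yZ_{t,T})]=-g(t,y)$ follows from $V'=-I$ off a countable set.

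The one step that does not go through as you suggest is $g(t,y)\in\D_U$ for $y\in\D_I$. Comparing lower limits cannot give it, and the ``monotone limits'' fallback does not rescue it, because it is false in general for $t>0$ when $L\neq 0$. For instance, if $L<0<r$, then $\hat L=Le^{-rT}$ and $\D_I=(0,\infty)$. Yet $g(t,y)\downarrow Le^{-r(T-t)}<Le^{-rT}$ as $y\uparrow\infty$. For such $y$, the map $x\mapsto u(t,x)-xy$ is strictly decreasing on $\D_U$, so the $\arg\max$ defining $i(t,y)$ does not exist. Moreover, the supremum is approached only as $x\downarrow\hat L\neq g(t,y)$, so your own uniqueness argument shows it stays strictly below $\E[V(yZ_{t,T})]$.

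The paper's proof has the same hole: it takes for granted that $\partial_x u(t,\cdot)=y$ has a root in $\D_U$. So take your first fallback and state the result for those $y$ with $g(t,y)>\hat L$; this is all of $\D_I=(0,\infty)$ when $L=0$. On that set your proof is complete. Drop the claim that the remaining cases follow by limits, since there the identities simply fail.
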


Now, we present the key equivalence between the dynamic Lagrange multiplier $\Y(t,x)$ and the conjugate dual point $\lambda(t,x)$. Recall that $\Y(t,x)$ is defined as the inverse of $g(t,y) = \E[Z_{t,T}I(yZ_{t,T})]$ in $y$ while $\lambda(t,x)$ comes from the Legendre-Fenchel transform of the value function $u(t,x)$. The following theorems show the implications between the martingale duality approach and the dynamic programming approach by revealing the connections among $\Y(t,x)$, $\lambda(t,x)$, the value function $u(t,x)$, and the pricing kernel $\{\xi_t\}_{0\leq t\leq T}$.
\begin{theorem}\label{thm:Lagrange multiplier identity and homogeneity}
Suppose Assumptions \ref{assp:utility}-\ref{assp} hold.
\begin{enumerate}[(i)]
    \item For any $t \in [0,T)$ and $x \in \D_U$,
    \begin{equation}\label{eq:first_equal}
        \Y(t,x) = \lambda(t,x) = \frac{\partial u(t,x)}{\partial x}.
    \end{equation}
    \item Let $\{X_t^*\}_{0\leq t\leq T}$ be the optimal wealth process of Problem \eqref{eq:main prob} with $X_0^* = x_0$. Then the following homogeneity relation holds: for any $t \in [0,T)$ and $x_0 \in \D_U$, 
\begin{equation*}
    \lambda\(t,X_t^*\) = \lambda\(0,x_0\)\xi_t,\quad \text{almost surely}.
\end{equation*}
\end{enumerate}
\end{theorem}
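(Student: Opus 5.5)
For part (i) the plan is to use Theorem \ref{prop of V} together with the strict concavity of $u(t,\cdot)$ (Lemma \ref{lem:concavity of value function}, which is invoked before Definition \ref{def:conjugate functions}). Theorem \ref{prop of V} gives $i(t,y)=g(t,y)$ for $y\in\D_I$, where $i(t,y)$ maximizes $u(t,x)-xy$ over $\D_U$.

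Fix $x\in\D_U$ and set $y^*:=\Y(t,x)$. By the continuity and strict monotonicity of $g(t,\cdot)$ and the definition of $\D_I$, we have $g(t,y^*)=x$ and $y^*\in\D_I$. Hence $i(t,y^*)=x$, so $x$ maximizes $u(t,\cdot)-y^*\cdot$. Fenchel duality for the concave upper semicontinuous function $u(t,\cdot)$ then gives
\[
v(t,y^*)+xy^*=u(t,x)\le v(t,y)+xy\quad\text{for every }y>0 .
\]
Therefore $y^*$ attains $\min_{y>0}\{v(t,y)+xy\}$.

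Uniqueness of this minimizer is what makes $\lambda(t,x)$ well defined, so the identification $\lambda(t,x)=\Y(t,x)$ needs it. I will argue that any minimizer $\tilde y$ satisfies $x\in\partial^+ u(t,\cdot)$-dual relation, namely that $x$ is a maximizer of $u(t,\cdot)-\tilde y\,\cdot$. Then $i(t,\tilde y)=x=g(t,\tilde y)$, and strict monotonicity of $g(t,\cdot)$ forces $\tilde y=y^*$. This shows $\lambda(t,x)=\Y(t,x)$.

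For the derivative identity I will prove differentiability of $u(t,\cdot)$ directly. Since $x$ maximizes $u(t,\cdot)-y^*\cdot$, $y^*$ is a supergradient of $u(t,\cdot)$ at $x$. Conversely, any supergradient $p$ at $x$ makes $x$ a maximizer of $u(t,\cdot)-p\,\cdot$. This requires $p\in\D_I$, which I expect to obtain from the range of $\Y(t,\cdot)$ on $\D_U$ and concavity. Then $g(t,p)=x$, so $p=y^*$. The superdifferential is therefore the singleton $\{y^*\}$, so $u(t,\cdot)$ is differentiable at $x$ with $\partial_x u(t,x)=\Y(t,x)$.

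As an alternative check, I would differentiate $u(t,x)=\E[U^{**}(I(\Y(t,x)Z_{t,T}))]$ from \eqref{eq:form of value function} using the distributional derivative of $I$. On the affine pieces $(a_k,b_k)$, where $I$ jumps, $U^{**}$ has slope $y$, so formally $\partial_x u=\Y\,\partial_x\E[Z I(\Y Z)]=\Y$. I expect this route to be the main obstacle, because $I$ is discontinuous and the interchange of derivative and expectation needs Assumption \ref{assp}. I will therefore try the convex-duality route above first and fall back on the distributional argument only to justify the regularity of $g$ if needed.

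For part (ii), the optimal wealth is $X_t^*=\E[Z_{t,T}I(\Y(0,x_0)\xi_T)\mid\F_t]$, by the formula for $X_s^{*,0,x_0}$ with $Z_{0,t}=\xi_t$. Writing $\xi_T=\xi_tZ_{t,T}$, with $Z_{t,T}$ independent of $\F_t$, gives $X_t^*=g(t,\Y(0,x_0)\xi_t)$ almost surely. When $X_t^*\in\D_U$, the definition \eqref{eq:def of Y} and strict monotonicity of $g(t,\cdot)$ yield $\Y(t,X_t^*)=\Y(0,x_0)\xi_t$. Part (i) then converts this to $\lambda(t,X_t^*)=\lambda(0,x_0)\xi_t$.

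The remaining detail in (ii) is that $\Y(0,x_0)\xi_t$ may fall outside $\D_I$. In that case $X_t^*$ still lies in the range of $g(t,\cdot)$, and I will check that the inverse relation $\Y(t,g(t,y))=y$ still holds there by strictness of $g$.
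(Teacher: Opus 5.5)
Your proposal is correct. For part (i) it takes a genuinely different route from the paper; part (ii) is essentially the paper's argument.

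The paper argues through derivatives. Lemma~\ref{lem:distributional derivative} gives $\partial_y v(t,y)=-g(t,y)$, and $\lambda(t,x)$ is identified as the root of $\partial_y v(t,\cdot)=-x$. The identity $\partial_x u=\Y$ then comes from the chain rule applied to $G(t,y)=u(t,g(t,y))$, using $\partial_y G=y\,\partial_y g$ and $\partial_x\Y=1/\partial_y g$.

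You argue through superdifferentials instead. The identity $i(t,\cdot)=g(t,\cdot)$ makes $\Y(t,x)$ a supergradient of $u(t,\cdot)$ at $x$. Showing that the superdifferential is exactly $\{\Y(t,x)\}$ then gives three things at once: differentiability, $\partial_x u=\Y$, and uniqueness of the minimizer defining $\lambda(t,x)$. The last holds because the minimizers of $y\mapsto v(t,y)+xy$ are exactly the supergradients at $x$.

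The step you leave open, that an arbitrary supergradient $p$ lies in $\D_I$, is not needed. For $x''<x<x'$ in $\D_U$, the values $\Y(t,x')$ and $\Y(t,x'')$ are supergradients at $x'$ and $x''$. Monotonicity of supergradients of a concave function then gives $\Y(t,x')\le p\le\Y(t,x'')$. Since $\Y(t,\cdot)$ is continuous (it inverts the continuous, strictly decreasing $g(t,\cdot)$), this forces $p=\Y(t,x)$.

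What your route buys: it uses only continuity and strict monotonicity of $g(t,\cdot)$, never its differentiability or the chain rule through $\Y$. The input you need, that $g(t,y)$ maximizes $u(t,\cdot)-y\,\cdot$, follows directly from the budget and conjugacy inequalities of Section~\ref{sec:DMCT}. Then $u(t,\cdot)$ is an infimum of the affine majorants $u(t,x)+\Y(t,x)(\cdot-x)$, so concavity comes for free. Part (i) could therefore be made independent of both the distributional calculus and Lemma~\ref{lem:concavity of value function}. The paper's route, in return, exhibits the envelope identity $\partial_y\E[U^{**}(I(yZ_{t,T}))]=y\,\partial_y g$, which underlies its shadow-price interpretation.

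For part (ii), the detail you flag is immediate: $\{y':g(t,y')\le g(t,y)\}=[y,\infty)$ gives $\Y(t,g(t,y))=y$ for every $y>0$. The only real caveat is that applying part (i) requires $X_t^*\in\D_U$, and the paper's proof also takes this for granted.
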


Theorem \ref{thm:Lagrange multiplier identity and homogeneity} is the main result of our dynamic Lagrange multiplier theory for a non-concave utility. It shows that for a given wealth value $x$ at time $t$, the Lagrange multiplier to Problem ($P_{tx}$) is exactly $\lambda(t, x)$. Moreover, $\Y(t,x)$ and $\lambda(t,x)$ both equal the marginal value function.
In economics, the Lagrange multiplier corresponding to the utility maximization problem is called the \textit{shadow price}, which reflects the marginal benefit of budgets. 
In our case, $\lambda(t,x)$ represents the marginal optimal expected utility at wealth level $x$ and time $t$. 

On the other hand, for fixed $t\in[0,T)$ and $x\in \D_U$, the value function is given by $\E[U(I(\lambda(0,x)\xi_T))|\F_t] = \E[U(I(\lambda(t,X_t^*)Z_{t,T}))|X_t^*]$, which decreases as the random variable $\lambda(t,X_t^*)$ increases. Observing the second part of Theorem \ref{thm:Lagrange multiplier identity and homogeneity} that $\lambda(t,X_t^*) = \lambda(0,x)\xi_t$, we can interpret this as follows: As the value of $\xi_t$ represents the market scenario in the Black-Scholes model (a large $\xi_t$ indicates a bad market performance and vice versa), and from the second part of Theorem \ref{thm:Lagrange multiplier identity and homogeneity}, the increase of $\lambda(t,X_t^*)$ is essentially caused by a poor market performance. 

Finally, we provide the optimal results in the non-concave utility framework with our proposed dynamic Lagrange multiplier, which are different from those in the literature (e.g., \cite{KLS1987} or Equation \eqref{eq:classic_port}). 
\begin{theorem}
	\label{thm-opt-value}
    Suppose that Assumptions \ref{assp:utility}-\ref{assp} hold.
		For Problem \eqref{eq:main prob}, the optimal terminal wealth is given by
		$
		X^*_T = I(\lambda(0, x_0)\xi_T).
		$
        Furthermore, if $X_t^*$ is twice differentiable in $\xi_t$ for all $t \in [0,T)$, then the optimal wealth process is given by
		\begin{equation*}
		X^*_t = i(t, \lambda(0, x_0)\xi_t) = i(t, \lambda(t, X_t^*)),\quad t\in[0,T),
		\end{equation*}
		and the optimal portfolio is
		\begin{equation*}
				\boldsymbol{\pi}_t^* = -\boldsymbol{\sigma}^{-1}\boldsymbol{\theta}\lambda(t, X^*_t) \left.\frac{\partial i(t,y)}{\partial y}\right|_{y = \lambda(t, X^*_t)}, \quad t\in[0,T).
		\end{equation*}
		
		%
		%
\end{theorem}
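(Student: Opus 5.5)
We prove the three assertions of Theorem \ref{thm-opt-value} in order, taking Theorems \ref{prop of V} and \ref{thm:Lagrange multiplier identity and homogeneity} as given.

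\textbf{Terminal wealth.} The discussion preceding \eqref{eq:form of value function}, specialised to $t=0$, $x=x_0$, shows that the optimal terminal wealth is $I(\Y(0,x_0)Z_{0,T})$. Since $\xi_0=1$, we have $Z_{0,T}=\xi_T$. Theorem \ref{thm:Lagrange multiplier identity and homogeneity}(i) gives $\Y(0,x_0)=\lambda(0,x_0)$. Together these yield $X_T^*=I(\lambda(0,x_0)\xi_T)$.

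\textbf{Wealth process.} In a complete market the replicating wealth of a claim is its deflated conditional expectation, so for $t\in[0,T)$
\begin{equation*}
X_t^*=\xi_t^{-1}\E\left[\xi_T I(\lambda(0,x_0)\xi_T)\mid\F_t\right]=\E\left[Z_{t,T}I(y Z_{t,T})\right]\Big|_{y=\lambda(0,x_0)\xi_t}.
\end{equation*}
The second equality uses $\xi_T=\xi_t Z_{t,T}$, the $\F_t$-measurability of $\xi_t$, and the independence of $Z_{t,T}$ from $\F_t$ (independent Brownian increments). The right-hand side is $g(t,\lambda(0,x_0)\xi_t)$. Theorem \ref{prop of V} identifies $g(t,\cdot)=i(t,\cdot)$ on $\D_I$, which gives $X_t^*=i(t,\lambda(0,x_0)\xi_t)$. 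Theorem \ref{thm:Lagrange multiplier identity and homogeneity}(ii) then replaces $\lambda(0,x_0)\xi_t$ by $\lambda(t,X_t^*)$.

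\emph{Domain check.} We must verify that $\lambda(0,x_0)\xi_t\in\D_I$, so that Theorem \ref{prop of V} applies. Since $x_0\in\D_U$, the relation $X_t^*=g(t,\lambda(0,x_0)\xi_t)$ together with $\lambda=\Y$ shows the point lies in the range of $\Y(t,\cdot)$ restricted to $\D_U$. This is where the choice of $\hat L$ in \eqref{eq:domain_new} enters.

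\textbf{Portfolio.} Write $X_t^*=h(t,\xi_t)$ with $h(t,z):=i(t,\lambda(0,x_0)z)$; this is $C^2$ in $z$ by the differentiability hypothesis. Since $\d\xi_t=-r\xi_t\d t-\xi_t\boldsymbol\theta^\top\d\mathbf W_t$, Itô's formula gives the diffusion coefficient of $X^*$ as $-\partial_z h(t,\xi_t)\,\xi_t\,\boldsymbol\theta^\top$. Compare this with the diffusion coefficient of \eqref{eq:SDE of wealth}, namely $\boldsymbol\pi_t^{*\top}\boldsymbol\sigma$. Uniqueness of the martingale representation, using invertibility of $\boldsymbol\sigma$, then gives
\begin{equation*}
\boldsymbol\pi_t^*=-(\boldsymbol\sigma^\top)^{-1}\boldsymbol\theta\,\xi_t\,\partial_z h(t,\xi_t).
\end{equation*}
The chain rule gives $\xi_t\partial_z h(t,\xi_t)=\lambda(0,x_0)\xi_t\,\partial_y i(t,y)|_{y=\lambda(0,x_0)\xi_t}$. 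Substituting $\lambda(t,X_t^*)=\lambda(0,x_0)\xi_t$ yields the stated formula, matching \eqref{eq:classic_port} with the paper's convention for the matrix $\boldsymbol\sigma^{-1}$.

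\textbf{Main obstacle.} The delicate step is justifying Itô's formula and the identification of the diffusion term, since $I$ is discontinuous. Smoothness of $i(t,\cdot)=g(t,\cdot)$ for $t<T$ comes from the lognormal density of $Z_{t,T}$: write $g(t,y)=\int I(w)\,\frac{w}{y}\,p_{t}(w/y)\,\frac{\d w}{y}$, where $p_t$ is the lognormal density, and differentiate under the integral. Assumption \ref{assp} provides the polynomial growth bounds that make this differentiation legitimate, in the distributional framework the paper adopts. The remaining concern, admissibility of $\boldsymbol\pi^*$, follows from continuity of $\partial_y i$ on $[0,t']\times$ compacts for $t'<T$, plus a limiting argument as $t'\uparrow T$.
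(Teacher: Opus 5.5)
Your proposal follows the paper's proof essentially step for step. It uses $\Y(0,x_0)=\lambda(0,x_0)$ with $Z_{0,T}=\xi_T$ for the terminal wealth, and independence of $Z_{t,T}$ from $\F_t$ to get $X_t^*=g(t,\lambda(0,x_0)\xi_t)=i(t,\lambda(0,x_0)\xi_t)$ followed by the homogeneity relation; for the portfolio it uses It\^{o}'s formula with the chain rule, where you merely spell out the diffusion matching behind \eqref{eq:classic_port}. Two small remarks: that matching actually yields $(\boldsymbol{\sigma}^\top)^{-1}\boldsymbol{\theta}$, which coincides with the stated $\boldsymbol{\sigma}^{-1}\boldsymbol{\theta}$ only for symmetric $\boldsymbol{\sigma}$ (e.g.\ $d=1$), so call it a transposition slip in the statement rather than a matter of convention; and your domain check is circular, since it presupposes $X_t^*\in\D_U$ and uses $\Y(t,\cdot)$ while $\D_I$ is defined through $\Y(0,\cdot)$, whereas the paper does no such check and simply applies $i(t,\cdot)=g(t,\cdot)$ at $y=\lambda(0,x_0)\xi_t$.
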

This theorem shows that the optimal wealth at time $t$ is exactly the conjugate dual point of $\lambda(0, x_0) \xi_t$ corresponding to the function $u(t,\cdot)$.
Moreover, the optimal portfolio $\{\boldsymbol{\pi}_t^*\}_{0\leq t\leq T}$ is a feedback control of the optimal wealth process $X_t^*$, which satisfies $\boldsymbol{\pi}_t^* = \boldsymbol{\Pi}(t, X_t^*)$ and $\boldsymbol{\Pi}$ is a deterministic function
\begin{equation*}
\boldsymbol{\Pi}: [0, T] \times \R \to \R^d, \; (t, x) \mapsto -\boldsymbol{\sigma}^{-1}\boldsymbol{\theta}\lambda(t, x)  \left.\frac{\partial i(t,y)}{\partial y}\right|_{y=\lambda(t, x)} .
\end{equation*}

\section{Main Proofs}\label{sec:proof}

\subsection{Technical Discussion}\label{sec:technical discussion}
In this subsection, we introduce the core concepts supporting our proofs: distribution theory and distributional derivatives (\cite{SS2010}). 
In theory, the Schwartz space on $\R$, denoted by $\S(\R)$, is a space of test functions. For $\varphi \in \S(\R)$, $\varphi$ has derivatives of all orders and satisfies
\begin{equation*}
    \sup_{z \in \R, \alpha, \beta \leq N} |z^\alpha \varphi^{(\beta)}(z) | < \infty,\quad\forall N \in \N_+.
\end{equation*}
We can define a subset of $\S(\R)$ by
\begin{equation*}
    \S_0(0,\infty) := \left\{\varphi \in \S(\R) : \varphi(z) = 0,\;\forall  z\leq 0 \text{ and }\lim_{z \downarrow 0}\left| z^{-\alpha} \varphi^{(\beta)}(z)\right| = 0, \; \forall \alpha, \beta \in \N_+\right\}.
\end{equation*}
It is easy to check that $\S_0(0,\infty)$ is a closed subspace of $\S(\R)$, and we endow it with the relative topology. The space $\S_0'(0,\infty)$ of all continuous linear functionals on $\S_0(0,\infty)$ is called the space of \emph{Schwartz distributions} or \emph{tempered distributions} on $(0,\infty)$. For instance, consider a locally integrable function $H$ on $(0,\infty)$. If there exists $\alpha_0 \in \N_+$ such that
\begin{equation}\label{eq:distribution condition}
    \lim_{z\downarrow 0} |z^{-\alpha_0}H(z)| = \lim_{z\uparrow \infty} |z^{-\alpha_0} H(z) | = 0,
\end{equation}
then $H$ defines a continuous linear functional $\mathcal{H}$ on $\S_0(0,\infty)$ that is given by 
\begin{equation*}
    \mathcal{H}\(\varphi\) = \langle H, \varphi \rangle := \int_0^\infty H(z) \varphi(z)\, \d z < \infty, \quad \varphi \in \S_0(0,\infty).
\end{equation*}
{Remarkably, the distribution on $\S(\R)$ is called the \textit{tempered distribution}, which is of polynomial growth at both negative and positive infinities. When restricting the domain of test functions to $(0,\infty)$, it is natural that the distribution should be of polynomial growth at the endpoints of the domain, i.e., $0$ and $\infty$. This is the reason we define the subspace $\S_0(0,\infty)$.}

Since $I$ and $U^{**}\circ I$ are decreasing, they are locally integrable on $(0,\infty)$. Under Assumption \ref{assp}, it is clear that both $I$ and $U^{**}\circ I$ satisfy the condition given by \eqref{eq:distribution condition}. On the other hand, we can compute the cumulative density function (CDF) of $Z_{t,T}$ as follows:
\begin{equation*}
\begin{aligned}
    F_{Z_{t,T} }(z) &= \p \left\{ \exp{\left\{-\(r+\frac{\|\btheta\|_2^2}{2}\)\(T-t\)-\|\btheta\|_2 \sqrt{T-t}N \right\}\leq z } \right\}\\
    &= \p\left\{ N \geq -\frac{1}{\|\btheta\|_2\sqrt{T-t}}\(\log\(z\)+\(r+\frac{\|\btheta\|_2^2}{2} \)\(T-t\) \) \right\}\\
    &= 1 -\Phi \(\hat{d}\(t,z\) \),
\end{aligned}
\end{equation*}
where $N$ is a standard normal random variable, $\Phi$ is the standard normal CDF, and $\hat{d}(t,z) := -\frac{1}{\|\btheta\|_2\sqrt{T-t}}\(\log\(z\)+\(r+\|\btheta\|_2^2/2 \)\(T-t\) \)$. 
Then by differentiating $F_{Z_{t,T}}$, we can get the probability density function (PDF) 
\begin{equation*}
    f_{Z_{t,T}}(z) = \frac{1}{z\|\btheta\|_2\sqrt{T-t}}\Phi'\(\hat{d}\(t,z\)\)\text{ for } z > 0 \text{ and $0$ otherwise}.
\end{equation*}
Define $\psi(z) := zf_{Z_{t,T}}(z)$.
It is easy to check that $f_{Z_{t,T}} \in \S_0(0,\infty)$ and $\psi \in \S_0(0,\infty)$. Hence, we have
\begin{equation*}
\begin{aligned}
    g(t,y) &= \frac{1}{y}\left\langle I, \frac{\psi}{y} \right\rangle < \infty, \quad 
    u(t,x)
    = \frac{1}{\Y\(t,x\)}\left\langle U^{**}\circ I, \frac{f_{Z_{t,T}}}{\Y\(t,x\)}\right\rangle < \infty,
\end{aligned}
\end{equation*}
which solve the finiteness problem pointed out in Remark \ref{rmk:finiteness}.
This also shows that $g(t,y)$ is continuous in $y$ and $u(t,x)$ is continuous in $x$.
Notably, Assumption \ref{assp} also plays an important role later in the proof of Lemma \ref{lem:distributional derivative}. Treating the expectations from the perspective of distribution theory, we can apply functional analysis techniques to exchange the order of differentiation and expectation. We will see this point later.
\begin{remark}
    We have shown that we can treat $I$ and $U^{**}\circ I$ as Schwartz distributions, which is a crucial conclusion for later proofs. Here, the finiteness results, $g(t,y) < \infty$ and $\E[U^{**}(I(\Y(t,x)Z_{t,T} ))] < \infty$, are direct corollaries. A simpler way to show the finiteness is by noting that Assumption \ref{assp} gives
    \begin{equation*}
        LZ_{t,T}\leq Z_{t,T}I(yZ_{t,T})\leq C_0y^{-M}Z_{t,T}^{1-M},
    \end{equation*}
    and that $I$ is of polynomial decay at infinity. Then $Z_{t,T}I(yZ_{t,T})$ is bounded by two log-normally distributed random variables and is thus integrable. This is similar for u(t,x).
\end{remark}
\subsection{Proof of Theorem \ref{prop of V}}

Before presenting the proof, we give some lemmas that will be used later.
\begin{lemma}\label{lem:distributional derivative}
Suppose that Assumptions \ref{assp:utility}-\ref{assp} hold.
We have the following identity:
\begin{equation*}
    \frac{\partial}{\partial y} \E\[U^{**}\(I\(yZ_{t,T}\) \) \] = y\frac{\partial g(t,y)}{\partial y},\quad (t,y)\in [0,T)\times \D_I.
\end{equation*}
\end{lemma}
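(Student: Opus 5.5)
Write $G(y) := \E[U^{**}(I(yZ_{t,T}))]$. Using the density $f := f_{Z_{t,T}}$ and the substitution $w = yz$, we have
\begin{equation*}
G(y) = \int_0^\infty U^{**}(I(w))\, \frac{1}{y} f\!\left(\frac{w}{y}\right) \d w, \qquad
g(t,y) = \int_0^\infty I(w)\, \frac{w}{y^2} f\!\left(\frac{w}{y}\right) \d w .
\end{equation*}
All $y$-dependence now sits in the smooth test functions $\varphi_y(w) := y^{-1} f(w/y)$ and $\chi_y(w) := w y^{-2} f(w/y)$. These belong to $\S_0(0,\infty)$ for every $y>0$, and $y \mapsto \varphi_y, \chi_y$ is differentiable as a map into $\S_0(0,\infty)$: difference quotients converge in every seminorm, locally uniformly in $y$, because of the Gaussian-in-$\log$ decay of $f$ and all its derivatives at $0$ and $\infty$. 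By the remark in Section \ref{sec:technical discussion}, Assumption \ref{assp} makes $H_1 := U^{**}\circ I$ and $H_2 := I$ tempered distributions on $\S_0(0,\infty)$. Continuity of the pairing then lets us differentiate under the integral:
\begin{equation*}
G'(y) = \langle H_1, \partial_y \varphi_y\rangle, \qquad \partial_y g(t,y) = \langle H_2, \partial_y \chi_y \rangle .
\end{equation*}

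The key identity to aim for is the distributional chain rule
\begin{equation*}
(U^{**}\circ I)' = w\, I' \quad \text{in } \S_0'(0,\infty).
\end{equation*}
I would prove it as follows. $I$ is decreasing, so $I'$ is a nonpositive Radon measure consisting of an absolutely continuous part, a Cantor part, and jumps. On the set where $I$ is continuous, $I(w)$ lies where $(U^{**})'(I(w)) = w$, so the chain rule gives density $w\,I'(w)$. At a jump point $w_0$, $I$ jumps from $I(w_0-) = b$ down to $I(w_0+) = a$, and $[a,b]$ is exactly an interval on which $U^{**}$ is affine with slope $w_0$: this is the content of Lemma \ref{lem:enve properties}(c)--(d) together with the definition \eqref{eq:I}. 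Hence $U^{**}(a) - U^{**}(b) = w_0(a-b)$, which is precisely $w_0$ times the jump mass of $I'$. The singular continuous part is handled the same way, since there $(U^{**})'(I(w)) = w$ holds $dI$-a.e. by monotone-function arguments.

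With the chain rule in hand, I would compare the two derivatives by integration by parts. Rescaling gives $\varphi_y(w) = \frac{d}{dw}\Psi_y(w)$ with $\Psi_y(w) := F(w/y)$, where $F$ is the CDF of $Z_{t,T}$. Then
\begin{equation*}
G'(y) = \partial_y \langle H_1, \Psi_y'\rangle = -\partial_y \langle H_1', \Psi_y \rangle = -\langle w I', \partial_y \Psi_y\rangle = \langle w I', \tfrac{w}{y^2} f(w/y)\rangle .
\end{equation*}
Similarly, write $g(t,y) = \langle I, w\,\varphi_y / y\rangle$. Since $\frac{\partial}{\partial y}(w\,\varphi_y / y)$ is a total $w$-derivative of a suitable function of $w/y$, the same integration by parts yields
\begin{equation*}
\partial_y g = \frac{1}{y}\langle w I', \tfrac{w}{y^2} f(w/y)\rangle = \frac{1}{y} G'(y),
\end{equation*}
which is the claim. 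As a sanity check, if $I$ were smooth this reduces to $G'(y) = \E[(U^{**})'(I(yZ))\, I'(yZ)\, Z] = y\,\E[Z^2 I'(yZ)] = y\,\partial_y g$.

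The main obstacle is the chain rule at the jumps of $I$. One has to verify carefully that each jump interval of $I$ is a full affine interval of $U^{**}$ with slope equal to the jump location; the delicate cases are when the interval touches a point of $S$'s boundary or $L$. A second, more technical issue is checking that $\Psi_y$ and the boundary terms at $0$ and $\infty$ vanish. This uses that $f$ decays faster than any power at both ends, which Assumption \ref{assp} is designed to match.
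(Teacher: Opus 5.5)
Your proposal is correct and follows essentially the paper's route. The substitution $w=yz$ moves the $y$-dependence into the test functions, continuity of the pairing lets you differentiate under it, and the distributional chain rule $(U^{**}\circ I)'=w\,I'$ finishes the argument; your explicit check that each jump of $I$ at $w_0$ spans an interval on which $U^{**}$ is affine with slope $w_0$ fills in a step the paper only asserts.

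One small repair concerns $\Psi_y=F(w/y)$, which tends to $1$ at infinity and so is not in $\S_0(0,\infty)$; moreover $\langle H_1',\Psi_y\rangle$ diverges when $U^{**}(L+)=-\infty$, e.g.\ for log utility. Instead write $\partial_y\varphi_y=\partial_w(\partial_y\Psi_y)$ and integrate by parts against $\partial_y\Psi_y(w)=-\frac{w}{y^2}f(w/y)\in\S_0(0,\infty)$, exactly as you already do for $g$.
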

\begin{proof}
Recall the distribution theory discussed in Section \ref{sec:technical discussion}. Let $\mathcal{I}$ be the Schwartz distribution on $\S_0(0,\infty)$ defined by $I$, i.e., 
\begin{equation*}
\mathcal{I}\(\varphi\) := \langle I, \varphi \rangle = \int_0^\infty I(z) \varphi(z)\, d z,\quad \varphi \in \mathcal{S}_0(0,\infty).
\end{equation*}
Then we can define the distributional (weak) derivative of $I$ by $I'$. We want to show that the following equation holds:
\begin{equation}\label{eq:exchange diff and int}
    \frac{\partial }{\partial y} \int_0^\infty I(yz) \varphi(z)\d z = \int_0^\infty I'(yz) z\varphi(z)\, \d z,\quad \varphi \in \mathcal{S}_0(0,\infty).
\end{equation}
To show this, we begin by computing the right-hand side of the equation:
\begin{align}
    \int_0^\infty I'\(yz\)z\varphi(z)\, \d z &= \frac{1}{y} \int_0^\infty I'\(u\) \frac{u}{y} \varphi\(\frac{u}{y}\)\, \d u\nonumber\\
    &= \frac{1}{y^2} \left\langle I', u \mapsto u\varphi\(\frac{u}{y} \)\right\rangle\nonumber\\
    &= -\frac{1}{y^2} \left\langle I, u \mapsto \varphi\(\frac{u}{y}\) + \frac{u}{y} \varphi'\(\frac{u}{y}\) \right\rangle,\nonumber
\end{align}
where the last equality comes from the definition of the distributional derivative. We show some important properties before computing the left-hand side of Equation \eqref{eq:exchange diff and int}. For $y \in \D_I$, define $\varphi_y(z) := \varphi(z/y)$. Then $\varphi_y \in \mathcal{S}_0(0,\infty)$ if $\varphi \in \mathcal{S}_0(0,\infty)$. Moreover, we have $\frac{\partial \varphi_y}{\partial y} (z) = - \frac{z}{y^2} \varphi'\(\frac{z}{y}\) \in \mathcal{S}_0(0,\infty)$ since $\mathcal{S}_0(0,\infty)$ is closed under differentiation and polynomial multiplication. 
Note that the linearity of the functional $\mathcal{I}$ shows that for any $h > 0$, we have
\begin{equation*}
    \frac{\mathcal{I}\(\varphi_{y+h} \) - \mathcal{I}\(\varphi_{y} \) }{h} = \mathcal{I} \(\frac{\varphi_{y+h}-\varphi_y}{h} \).
\end{equation*}
Taking $h \rightarrow 0$ on both sides and using the continuity of $\mathcal{I}$, we have
\begin{equation*}
    \frac{\partial \mathcal{I}(\varphi_y)}{\partial y} = \mathcal{I}\(\frac{\partial \varphi_y}{\partial y} \).
\end{equation*}
With this fact, the left-hand side of Equation \eqref{eq:exchange diff and int} can be computed as
\begin{align}
    \frac{\partial }{\partial y} \int_0^\infty I(yz) \varphi(z)\d z 
    &= \frac{\partial }{\partial y} \(\frac{1}{y} \int_0^\infty I(u) \varphi\(\frac{u}{y} \)\,\d u \)\nonumber\\
    &= -\frac{1}{y^2}\int_0^\infty I(u)\(\varphi\(\frac{u}{y}\)+\frac{u}{y}\varphi'\(\frac{u}{y}\) \)\,\d u\nonumber\\
    &= -\frac{1}{y^2} \left\langle I, u \mapsto \varphi\(\frac{u}{y}\) + \frac{u}{y} \varphi'\(\frac{u}{y}\) \right\rangle.\nonumber
\end{align}
Hence, Equation \eqref{eq:exchange diff and int} holds. Finally, substituting $\varphi = \psi$ in Equation \eqref{eq:exchange diff and int} yields
\begin{equation}\label{eq:partial derivative of g}
    \frac{\partial g(t,y)}{\partial y} = \frac{\partial}{\partial y} \E\[Z_{t,T}I\(yZ_{t,T}\) \] = \frac{\partial}{\partial y} \int_0^{\infty} I(yz)\psi(z)\, \d z = \int_0^\infty I'(yz) z \psi(z)\, \d z.
\end{equation}
Let $\range (U^{**})'$ denote the range of $(U^{**})'$. Then by the definition of $I$, we have $I'(y) = 0$ for $y \in \D_I$ and $y \notin \range (U^{**})'$. Hence, we further have
\begin{equation}\label{eq:g distributional derivative}
    \frac{\partial g(t,y)}{\partial y} = \int_0^\infty I'\(yz\)z\psi\(z\) \id_{\left\{yz \in \range \(U^{**}\)' \right\}}\,\d z.
\end{equation}
Now we consider the expectation $\E\[U^{**}\(I\(yZ_{t,T} \) \) \]$. By a similar reasoning as above, we have 
\begin{equation*} 
    \frac{\partial}{\partial y} \E\[U^{**}\(I\(yZ_{t,T} \) \) \] = \int_0^\infty \(U^{**}\circ I\)'(yz) \psi(z)\, \d z,
\end{equation*}
where $(U^{**}\circ I)'$ denotes the distributional derivative of $U^{**}\circ I$. 
Note that by definition, the function $U^{**} \circ I$ has a jump at some $a_0 >0$ if and only if $I$ has a jump at $a_0$. Moreover, $\(U^{**}\circ I\)'(y) = 0$ if $y \notin \range \(U^{**}\)'$.
Since $U^{**}$ is right-hand differentiable everywhere, we can apply the chain rule to get
\begin{align*}
    \(U^{**}\circ I \)'(y) &= \(U^{**}\)'(I(y)) \cdot I'(y)
    =\left\{
    \begin{aligned}
    &y \cdot I'(y), && y \in \range \(U^{**}\)'; \\
    &0, && \text{otherwise.} 
    \end{aligned}
    \right.
\end{align*}
Then it follows that
\begin{equation*}
    \frac{\partial}{\partial y}\E\[U^{**}\(I\(yZ_{t,T}\) \) \] = y \int_0^\infty I'(yz)z\psi(z)\id_{\left\{yz\in\range \(U^{**}\)' \right\}}\, \d z.
\end{equation*}
Combining with Equation \eqref{eq:g distributional derivative}, we get
\begin{equation*}
    \frac{\partial}{\partial y} \E\[U^{**}\(I\(yZ_{t,T}\) \) \] = y\frac{\partial g(t,y)}{\partial y},
\end{equation*}
which completes the proof.
\end{proof}

We proceed to compute the explicit form of $I'$ for later contexts. Let $\{a_j\}_{j\in \mathcal{J}}$ denote the set of jump points of $I$, where the index set $\mathcal{J}$ is countable. At each $a_j$, we can compute the distributional derivative of $I$. Specifically, let $I|_{\text{ctn}}$ be the restriction of $I$ on its continuous domain. It is by the definition of $I$ that for $y \notin \range \(U^{**}\)'$, $y$ is on the continuous domain of $I$ (i.e., $I$ is continuous at $y$). Then the distributional derivative of $I$ is given by
\begin{equation}\label{eq:I distributional derivative}
    I'(y) = \(I|_{\text{ctn}}\)'(y) + \sum_{j \in \mathcal{J}} \(I\(a_j^+\)-I\(a_j^-\) \)\delta\(y - a_j\), \quad y \in \D_I,
\end{equation}
where $(I|_{\text{ctn}})'(\cdot)$ denotes the right-hand derivative of $I|_{\text{ctn}}(\cdot)$ and $\delta(\cdot)$ is the Dirac distribution. 
Note that the chain rule holds for distributional derivatives.
From Equation \eqref{eq:partial derivative of g}, the partial derivative of $g(t, y)$ can be computed as
\begin{equation}\label{eq:distributional derivative}
\begin{aligned}
   \frac{\partial g(t,y)}{\partial y} &= 
   \frac{\partial}{\partial y} \E\[Z_{t,T} I\(yZ_{t,T}\) \]\\
   &= \E\[Z_{t,T}^2\(I|_{\text{ctn}}\)'\(yZ_{t,T} \) \] \\
   &\quad+ \int_0^{\infty} z^2 \sum_{j \in \mathcal{J}} \(I\(a_j^+\)-I\(a_j^-\) \)\delta\(yz-a_j \) f_{Z_{t,T}}(z)\, \d z\\
   &= \E\[Z_{t,T}^2\(I|_{\text{ctn}}\)'\(yZ_{t,T} \)\id_{\left\{yZ_{t,T}\in \range \(U^{**}\)' \right\}} \]\\
   &\quad+\sum_{j\in \mathcal{J}}\(I\(a_j^+\)-I\(a_j^-\) \) \frac{a_j^2}{y^3}\cdot f_{Z_{t,T}}\(\frac{a_j}{y}\) .
\end{aligned}
\end{equation}
Moreover, we have the following lemma.
\begin{lemma}\label{lem:concavity of value function}
Suppose that Assumptions \ref{assp:utility}-\ref{assp} hold.
The value function $u(t,x)$ is strictly concave in $x$ for any fixed $t \in [0,T)$.
\end{lemma}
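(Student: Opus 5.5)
Fix $t\in[0,T)$. The plan is to work with the representation $u(t,x)=h(\Y(t,x))$, where
\begin{equation*}
h(y):=\E\[U^{**}\(I\(yZ_{t,T}\)\)\],
\end{equation*}
which follows from \eqref{eq:form of value function}. We then combine Lemma \ref{lem:distributional derivative} with the fact that $\Y(t,\cdot)$ inverts $g(t,\cdot)$.

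The first step is to establish $x=g(t,\Y(t,x))$ for every $x\in\D_U$. Here $\D_U=(\hat L,\infty)$ lies inside the range of $g(t,\cdot)$: that range is the interval $(Le^{-r(T-t)},\infty)$, by continuity of $g$ in $y$ and the two limits computed in Section \ref{sec:DMCT}. We also use that $g(t,\cdot)$ is strictly decreasing. It follows that $\Y(t,\cdot)$ is the genuine inverse of $g(t,\cdot)$ on $\D_U$, and that it is continuous and strictly decreasing.

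The second step is to show $\partial_x u(t,x)=\Y(t,x)$, and we would avoid differentiating $\Y$ directly. Lemma \ref{lem:distributional derivative} gives $h'(y)=y\,\partial_y g(t,y)$. Writing $y_1=\Y(t,x_1)$ and $y_2=\Y(t,x_2)$, we obtain
\begin{equation*}
u(t,x_2)-u(t,x_1)=h(y_2)-h(y_1)=\int_{y_1}^{y_2} y\,\partial_y g(t,y)\,\d y=\int_{x_1}^{x_2}\Y(t,x)\,\d x.
\end{equation*}
The last equality is the substitution $x=g(t,y)$, which is legitimate because $g(t,\cdot)$ is monotone and continuous (a Stieltjes change of variables). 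From the formula \eqref{eq:distributional derivative}, $\partial_y g(t,\cdot)$ is an honest locally integrable function, since the Dirac terms have been integrated against the smooth density. The displayed identity therefore shows that $u(t,\cdot)$ is $C^1$ on $\D_U$ with derivative $\Y(t,\cdot)$.

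An alternative route for the concavity itself avoids calculus altogether. For $x_1,x_2\in\D_U$ and $\alpha\in(0,1)$, the convex combination $\alpha X^{*,t,x_1}_T+(1-\alpha)X^{*,t,x_2}_T$ is admissible with initial wealth $\alpha x_1+(1-\alpha)x_2$. Concavity of $U^{**}$, together with $u(t,x)=\sup\E[U^{**}(X_T)]$, then gives concavity of $u(t,\cdot)$. This supremum formula holds because $U\le U^{**}$ and the optimum is attained at $I(\cdot)\notin S$, so the two suprema coincide by the same duality argument. Strictness, however, would still require the derivative argument, so we prefer the derivative route.

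The third step deduces strict concavity: $\partial_x u(t,\cdot)=\Y(t,\cdot)$ is strictly decreasing on $\D_U$, and a $C^1$ function with strictly decreasing derivative is strictly concave.

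The main obstacle is the second step, namely justifying the change of variables and the pointwise meaning of $h'$ from Lemma \ref{lem:distributional derivative}. That lemma is stated in the distributional sense, so one must confirm that $h$ and $g(t,\cdot)$ are absolutely continuous in $y$, so that the fundamental theorem of calculus applies. We would first argue this from \eqref{eq:distributional derivative}: the smoothing by $f_{Z_{t,T}}$ makes both $\partial_y g$ and $h'$ continuous functions of $y$. Local uniform convergence of the jump series follows from Assumption \ref{assp} and the Gaussian decay of $f_{Z_{t,T}}$ in $\log z$.
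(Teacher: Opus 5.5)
Your argument is correct, but for strictness it takes a genuinely different route from the paper. The paper gets non-strict concavity exactly as in your ``alternative route'': convex combinations of controls, linearity of \eqref{eq:SDE of wealth}, and concavity of $U^{**}$. It then proves strictness by contradiction through dynamic programming. It uses that $u$ is a viscosity solution of the HJB equation (Lemma \ref{lem:HJB equation}, which rests on a dynamic programming principle), that $u(t,\cdot)$ is strictly increasing, and that $u$ is $C^1$ in $t$. On an interval where $u(t,\cdot)$ is affine the second-order term vanishes, so the Hamiltonian is $+\infty$ (since $\partial_x u>0$ and $\boldsymbol{\mu}\neq r\mathbf{1}_d$), contradicting finiteness.

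You instead establish the first-order identity $\partial_x u(t,\cdot)=\Y(t,\cdot)$ first and read strict concavity off the strict decrease of $\Y(t,\cdot)$. This is not circular: neither Lemma \ref{lem:distributional derivative} nor the inverse relation $g(t,\Y(t,x))=x$ uses the present lemma. It is the identity the paper only derives later in \eqref{eq:deriv of u} and \eqref{eq:deriv and multiplier}, and your integrated form avoids differentiating $\Y$, so you never need $\partial_y g\neq 0$.

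Your route is elementary and one-dimensional. It dispenses with viscosity theory, the DPP and the $t$-regularity of $u$, and it delivers the second equality of Theorem \ref{thm:Lagrange multiplier identity and homogeneity}(i) as a by-product. The paper's route has a first half that needs no density or distributional input and a second half that ties concavity to the HJB structure, at the price of heavier and more delicate machinery. Both implicitly use $\btheta\neq 0$.

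The regularity you flag as the main obstacle is easier than you suggest. Write $g(t,y)=y^{-1}\int_0^\infty I(w)\psi(w/y)\,\d w$ and $h(y)=y^{-1}\int_0^\infty U^{**}(I(w))f_{Z_{t,T}}(w/y)\,\d w$. Dominated convergence then gives $C^1$ dependence on $y$ without touching the jump series, using the polynomial growth of $I$ and $U^{**}\circ I$ at $0$ and $\infty$ and the super-polynomial decay of $\psi$, $f_{Z_{t,T}}$ and their derivatives. Moreover, $h'=y\,\partial_y g$ follows without distributions from $h(y)=\E[V(yZ_{t,T})]+y\,g(t,y)$ and $\partial_y\E[V(yZ_{t,T})]=-g(t,y)$. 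The latter holds because $V$ is convex with $V'=-I$ off a countable set.
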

\begin{proof}
We first prove the case that $u(t,x)$ is (non-strictly) concave in $x$. Note that
\begin{equation*}
    u\(t,x\) = \sup_{\boldsymbol{\pi}\in \mathcal{V}[t,T]}\E\[U\(X_T^{t,x,\boldsymbol{\pi}} \) \] = \E\[U^{**}\(I\(\Y(t,x)Z_{t,T} \)\) \]= \sup_{\boldsymbol{\pi}\in \mathcal{V}[t,T]}\E\[U^{**}\(X_T^{t,x,\boldsymbol{\pi}} \) \].
\end{equation*}
Now, denote by $\{X_s^{t, x, \boldsymbol{\pi}}\}_{t\leq s\leq T} $ the wealth process starting at $X_t=x$ and controlled by $\boldsymbol{\pi}\in \mathcal{V}[t,T]$. Define the function
\begin{equation*}
    J\(t,x; \boldsymbol{\pi}\) := \E \[ U \( X_T^{t, x, \boldsymbol{\pi}} \) \], \quad \(t,x\) \in [0,T) \times \D_U.
\end{equation*}
Consider $x_1, x_2 \in \D_U$ and $0 < \lambda < 1$. Let $x_\lambda = \lambda x_1 + (1-\lambda) x_2$. Then $x_\lambda \in \D_U$ since $\D_U$ is a convex set. Since $u(t,x) = \sup_{\boldsymbol{\pi} \in \mathcal{V}[t,T]}\E[U(X_{T}^{t,x,\boldsymbol{\pi}})]$ for every $\epsilon >0$ and $x \in \D_U$, there exists $\boldsymbol{\pi}^\epsilon \in \mathcal{V}[t,T]$ such that
\begin{equation*}
    J\(t, x; \boldsymbol{\pi}^\epsilon\) > u\(t, x\) - \epsilon.
\end{equation*}
Hence, for a fixed $\epsilon > 0$, there exist $\boldsymbol{\pi}^{\epsilon,1}$ and $\boldsymbol{\pi}^{\epsilon,2}$ satisfying
\begin{equation*}
    J\(t, x_1; \boldsymbol{\pi}^{\epsilon,1}\) > u\(t, x_1\) - \epsilon,\quad
    J\(t, x_2; \boldsymbol{\pi}^{\epsilon,2}\) > u\(t, x_2\) - \epsilon.
\end{equation*}
Let $\boldsymbol{\pi}^{\lambda} = \lambda \boldsymbol{\pi}^{\epsilon,1} + \(1-\lambda\)\boldsymbol{\pi}^{\epsilon,2} $. It is clear that $\boldsymbol{\pi}^\lambda \in \mathcal{V}[t,T]$. From Equation \eqref{eq:SDE of wealth}, we see that the wealth process satisfies a linear SDE, which implies that the terminal wealth $X_T$ is a linear functional of the control process $\{\boldsymbol{\pi}_t\}_{0\leq t\leq T}$. Since a linear SDE admits a unique solution, the following equality holds: 
\begin{equation*}
    X_T^{t,x_\lambda, \boldsymbol{\pi}^\lambda} = \lambda X_T^{t,x_1,\boldsymbol{\pi}^{\epsilon,1}} + \(1-\lambda\)X_T^{t,x_2,\boldsymbol{\pi}^{\epsilon,2}}.
\end{equation*}
Then we have 
\begin{align*}
    u\(t, x_\lambda\) = \sup_{\boldsymbol{\pi}\in \mathcal{V}[t,T]}\E\[U^{**}\(X_T^{t,x_\lambda, \boldsymbol{\pi}} \)\]
    &\geq \E\[U^{**}\(X_T^{t,x_\lambda,\boldsymbol{\pi}^\lambda } \) \]\\
    &= \E\[U^{**}\(\lambda X_T^{t,x_1,\boldsymbol{\pi}^{\epsilon,1} }+\(1-\lambda\) X_T^{t,x_2,\boldsymbol{\pi}^{\epsilon,2} } \) \]\\
    &\geq \lambda \E\[U^{**}\(X_T^{t,x_1,\boldsymbol{\pi}^{\epsilon,1}} \) \] +\(1-\lambda\)\E\[U^{**}\(X_T^{t,x_2,\boldsymbol{\pi}^{\epsilon,2} } \) \]\\
    &\geq \lambda J\(t, x_1;\boldsymbol{\pi}^{\epsilon,1} \) + \(1-\lambda\)J\(t, x_2; \boldsymbol{\pi}^{\epsilon,2} \)\\
    &> \lambda u\(t, x_1\)+ \(1-\lambda\)u\(t,x_2\)-\epsilon.
\end{align*}
Taking $\epsilon \downarrow 0$, we have that $u(t,x)$ is concave in $x$.

Next, we rely on the following lemma to prove that $u(t,x)$ is strictly concave in $x$. The proof is given in the appendix.
\begin{lemma}\label{lem:HJB equation}
Suppose that Assumptions \ref{assp:utility}-\ref{assp} hold.
For $(t,x)\in [0,T)\times\D_U$, $\varphi \in C^{1,2}([0,T)\times \D_U)$, and $
\boldsymbol{\pi} \in \R^d$, define the operator
\begin{align*}
    \L \(t,x,\varphi;\boldsymbol{\pi}\) &:= \( r\(x - \sum_{i=1}^d \pi_{i}\) +  \sum_{i=1}^d  \mu_i \pi_{i}\) \frac{\partial \varphi(t,x)}{\partial x}\\
    &\quad \quad + \frac12 \(\sum_{i=1}^d \pi_{i}\boldsymbol{\sigma}_i\)^\top\(\sum_{i=1}^d \pi_{i}\boldsymbol{\sigma}_i\) \frac{\partial^2 \varphi (t,x)}{\partial x^2}.
\end{align*}
    Assume that $u(t,x)$ is finite and continuous for all $(t,x) \in [0,T) \times \D_U$. Then it is a viscosity solution of the following HJB equation:
    \begin{equation}\label{eq:HJB of u}
    -\frac{\partial \varphi(t,x)}{\partial t} - \sup_{\boldsymbol{\pi} \in \R^d}\L\(t,x,\varphi;\boldsymbol{\pi}\) = 0, \quad (t,x) \in [0,T) \times \D_U.
\end{equation}
\end{lemma}

Suppose that there exist $a, b \in \R$ and $t\in[0,T)$ such that $u(t,\cdot)$ is linear on $(a,b)$. It follows that $\frac{\partial^2 u(t,x)}{\partial x^2} = 0$ for $x \in (a,b)$. 

We first show that $u(t,x)$ is strictly increasing in $x$ for fixed $t \in [0,T)$. The definition of $u(t,x)$ and the attainability of the solution to Problem \eqref{prob-kernel} show that there exists $\bar{\boldsymbol{\pi}} \in \mathcal{V}[t,T]$ such that $u(t,x) = J(t,x; \bar{\boldsymbol{\pi}})$. From the definition of $U$ and Equation \eqref{eq:SDE of wealth}, for two initial values $x_1 < x_2$, we have $J(t,x_1; \bar{\boldsymbol{\pi}}) < J(t,x_2; \bar{\boldsymbol{\pi}})$. This shows that for any $t\in[0,T)$, $u(t,\cdot)$ is strictly increasing in $x$.


Since $u(t,x)$ is strictly increasing for any $t\in[0,T)$, $\frac{\partial u(t,x)}{\partial x}$ is a positive constant and $\frac{\partial^2 u(t,x)}{\partial x^2} = 0$ for $x\in(a,b)$. Utilizing the distributional derivative, the differentiability of $u(t,x)$ in $t$ can be proved by a similar argument in the proof of Lemma \ref{lem:distributional derivative}.
From Equations \eqref{eq:form of value function} and \eqref{eq:I distributional derivative}, we have
\begin{align}
        \frac{\partial u(t,x)}{\partial t} &= \frac{\partial}{\partial t}\E\[U^{**}\(I\(\Y(t,x)Z_{t,T} \) \) \]\nonumber\\
        &= \frac{\partial }{\partial t}\int_0^{\infty} U^{**}\(I\(\Y(t,x)z \) \)f_{Z_{t,T}}(z) \, \d z\nonumber\\
        &= \int_0^{\infty} z^2\Y(t,x)\frac{\partial \Y(t,x)}{\partial t}\(I|_{\text{ctn}}\)'\(\Y(t,x)z\)f_{Z_{t,T}}(z)\,\d z \nonumber\\
        &\quad +\frac{\partial \Y(t,x)}{\partial t} \sum_{j\in\mathcal{J}}\(I(a_j^+)-I(a_j^-) \)\frac{a_j^2}{\Y(t,x)^2}f_{Z_{t,T}}\(\frac{a_j}{\Y(t,x)} \)\nonumber\\
        &\quad+\int_0^{\infty}  U^{**}\(I\(\Y(t,x)z \) \) \frac{\partial f_{Z_{t,T}}(z)}{\partial t}\, \d z,\nonumber
\end{align}
which is continuous in both $t$ and $x$ since each component is continuous in $t$ and $x$.
Further, Proposition 4.5.6 in \cite{YZ1999} gives an equivalent definition of viscosity solution, which utilizes the \textit{second-order parabolic (super-)subdifferential}. In the case where $u(t,x)$ is differentiable in $t$ and twice differentiable in $x$ at some point $(\tilde{t}, \tilde{x})$, the second-order parabolic (super-)subdifferential simply consists of the classic derivatives of $u(t,x)$. Hence, the proposition shows that $u(\tilde{t},\tilde{x})$ satisfies the following equation:
\begin{equation}\label{eq:classic HJB equation}
\begin{aligned}
    &\left.\frac{\partial u(t,x)}{\partial t}\right|_{(t,x)=(\tilde{t}, \tilde{x})}  + \sup_{\boldsymbol{\pi}_t \in \R^d}\left\{\( r\(x - \sum_{i=1}^d \pi_{i,t}\) +  \sum_{i=1}^d  \mu_i \pi_{i,t}\) \left.\frac{\partial u(t,x)}{\partial x}\right|_{(t,x)=(\tilde{t}, \tilde{x})}\right.\\
    &\left.+ \frac12 \(\sum_{i=1}^d \pi_{i,t}\boldsymbol{\sigma}_i\)^\top\(\sum_{i=1}^d \pi_{i,t}\boldsymbol{\sigma}_i\) \left.\frac{\partial^2 u (t,x)}{\partial x^2}\right|_{(t,x)=(\tilde{t}, \tilde{x})}  \right\} = 0.
\end{aligned}
\end{equation}
We have shown that $u(t,x)$ is continuously differentiable in $t$. For any $(\bar{t}, \bar{x})$ such that $u(\bar{t}, \cdot)$ is linear in a neighborhood of $\bar{x}$, the second-order partial derivative term in the Hamiltonian in Equation \eqref{eq:classic HJB equation} vanishes. Thus, the following equation holds:
\begin{equation*}
\left.\frac{\partial u(t,x)}{\partial t}\right|_{(t,x)=(\bar{t}, \bar{x})} + \sup_{\boldsymbol{\pi}_t \in \R^d} \left\{ \( r\(x - \sum_{i=1}^d \pi_{i,t}\) +\sum_{i=1}^d \mu_i \pi_{i,t} \)\cdot \left.\frac{\partial u(t,x)}{\partial x}\right|_{(t,x)=(\bar{t}, \bar{x})} \right\} = 0.
\end{equation*}
Recall that $\frac{\partial u(t,x)}{\partial x} |_{(t,x) = (\bar{t}, \bar{x})} > 0$. Since $\{\boldsymbol{\pi}_t\}_{0\leq t\leq T}$ can be unbounded, the Hamiltonian is unbounded, i.e., the supremum term equals $\infty$. For a function to be the viscosity solution of the HJB equation, it is supposed to be continuous and hence finite. This leads to a contradiction, which concludes the proof.
\end{proof}

Now, with Lemmas \ref{lem:distributional derivative}-\ref{lem:HJB equation}, we are ready to present the proof of Theorem \ref{prop of V}.
\begin{proof}[Proof of Theorem \ref{prop of V}]
		From Lemma \ref{lem:form of I}, we have
            \begin{equation*}
            V\(y\) = \sup_{x\in \D_U} \{U\(x\) -xy\} = U\( I(y)\) - y I\(y\),\quad y \in \D_I.
            \end{equation*}
            From Lemma \ref{lem:concavity of value function}, we know that for a given $y >0$, $u(t,x) - xy$ is strictly concave in $x$. Hence, for each $y\in \D_I$, we have that $i(t,y)$ is the unique solution of
            $$
            \left.\frac{\partial u ( t,x) }{\partial x}\right|_{x =  i(t,y)} = y.
            $$
            From Lemma \ref{lem:distributional derivative} and Equation \eqref{eq:I distributional derivative}, we see that the partial derivative of the value function can be computed as
            \begin{equation}\label{eq:equality of derivatives}
            \begin{aligned}
              \frac{\partial}{\partial x}  \E\[U^{**}\(I\(\Y(t,x)Z_{t,T} \) \) \] &=\Y(t,x)\frac{\partial \Y(t,x)}{\partial x}\cdot \Bigg\{ \E[Z_{t,T}^2 \(I|_{\text{ctn}}\)'\(\Y(t,x) Z_{t,T}\)]\\
              &\quad \left. + \int_0^{\infty} z^2 \sum_{j \in J} \(I\(a_j^+\)-I\(a_j^-\) \)\delta\(\Y(t,x) z-a_j \) f_{Z_{t,T}}(z)\, \d z\right\}\\
              &=\Y(t,x)\frac{\partial \Y(t,x)}{\partial x} \left.\frac{\partial g(t,y)}{\partial y}\right|_{y = \Y(t,x)},
            \end{aligned}
            \end{equation}
            which is equivalent to
            \begin{equation}\label{eq:deriv of u}
                \frac{\partial u(t,x)}{\partial x} = \Y(t,x)\frac{\partial \Y(t,x)}{\partial x}\left.\frac{\partial g(t,y) }{\partial y}\right|_{y = \Y(t,x)}.
            \end{equation}
            Moreover, by definitions of $g(\cdot,\cdot)$ and $\Y(\cdot,\cdot)$, we have
            \begin{equation}\label{eq:deriv of Y}
            \begin{aligned}
            \frac{\partial \Y(t,x)}{\partial x}  &= \frac{1}{\left.\frac{\partial g\(t,y\)}{\partial y}\right|_{y = \Y\(t,x\)} }.
            \end{aligned}
            \end{equation}
            Again, utilizing the definition of $\Y(\cdot,\cdot)$, it is clear that
            \begin{equation}\label{eq:inverse functions}
            \Y\(t, \E[Z_{t,T}I\(yZ_{t,T}\)] \) = y.
            \end{equation}
            Combining Equations \eqref{eq:deriv of u}, \eqref{eq:deriv of Y} and \eqref{eq:inverse functions}, we get
            \begin{equation*}
            \left.\frac{\partial u ( t, x)}{\partial x}\right|_{x = g(t,y)} = y.
            \end{equation*}     
            This shows that  
            \begin{align*}
            v(t, y) &= \E[U(I(y Z_{t,T})) - y Z_{t,T} I(y Z_{t,T})] = \E[V(y Z_{t,T})],&& y \in \D_I,\\
            i(t, y) &= \E[Z_{t,T} I(y Z_{t,T})] = g(t,y),&& y \in \D_I.
            \qed
            \end{align*}
            

\end{proof}
\subsection{Proofs of Theorems \ref{thm:Lagrange multiplier identity and homogeneity}-\ref{thm-opt-value}}

\begin{proof}[Proof of Theorem \ref{thm:Lagrange multiplier identity and homogeneity}]
(i) Recall the definition of $g(t,y)$ and the fact that $\E[U\(I\(yZ_{t,T} \) \)] = \E[U^{**}\(I\(yZ_{t,T} \) \)]$.
Theorem \ref{prop of V} shows that
\begin{equation*}
    v(t,y)= \E\[U\(I\(yZ_{t,T} \) \) - yZ_{t,T}I\(yZ_{t,T}\) \] = \E\[U^{**}\(I\(yZ_{t,T} \) \)\] - y g(t,y).
\end{equation*}
For a fixed $t \in [0,T)$, we can compute the partial derivative of $v(t,y)$ with respect to $y$ by Lemma \ref{lem:distributional derivative}:
\begin{equation}\label{eq:deriv of v}
\begin{aligned}
    \frac{\partial v(t,y)}{\partial y} &= \frac{\partial}{\partial y} \E\[U^{**}\(I\(yZ_{t,T} \) \)\] - y \frac{\partial g(t,y)}{\partial y} - g(t,y)\\
    &= -g(t,y).
\end{aligned}
\end{equation}
Note that $v(t,y)$ is strictly convex in $y$ since Lemma \ref{lem:concavity of value function} shows that $u(t,x)$ is strictly concave in $x$. From Equations \eqref{def nu_t} and \eqref{eq:deriv of v}, we have that $\lambda(t,x)$ is the unique solution of
\begin{equation*}
    \left.\frac{\partial v(t,y)}{\partial y}\right|_{y = \lambda(t,x)} = -g\(t, \lambda(t,x)\) = -x,\quad x \in \D_U.
\end{equation*}
On the other hand, Equation \eqref{eq:def of Y} shows that $\Y(t,x)$ is the unique number satisfying $g(t,\Y(t,x)) = x$ for $x \in \D_U$. Hence, for any $t\in[0,T)$ and $x \in \D_U$, we have $\Y(t,x) = \lambda(t,x)$.

To show the second equality in (i), let $G\(t,y\) := u(t, g(t,y))$. Recalling the forms of $u(t,x)$ and $g(t,y)$, we immediately see that
        \begin{equation}\label{eq:derivative duality}
        \frac{\partial G\(t,y\)}{\partial y} = y \frac{\partial g\(t,y\)}{\partial y}.
        \end{equation}
        By the definition of $g(t,y)$, we know that 
        \begin{equation*}
            u \(t, x\) = G\(t, \Y\(t,x\) \).
        \end{equation*}
        Differentiating both sides in $x$ and combining with Equation \eqref{eq:derivative duality}, we have
        \begin{equation}\label{eq:deriv and multiplier}
        \begin{aligned}
            \frac{\partial u \(t,x\)}{\partial x} &=\frac{\partial \Y\(t,x\)}{\partial x}\cdot\left. \frac{\partial G\(t, y \)}{\partial y}\right|_{y = \Y\(t,x\)} \\
            &= \frac{\partial \Y\(t,x\)}{\partial x} \Y\(t,x\) \left.\frac{\partial g \(t, y \) }{\partial y}\right|_{ y = \Y\(t,x\)} 
            = \Y\(t, x\).
        \end{aligned}
        \end{equation}
Hence, we have for any $t\in[0,T)$ and $x\in \D_U$,
\begin{equation*}
  \Y(t,x) = \lambda(t,x)= \frac{\partial u(t,x)}{\partial x}.
\end{equation*}

\noindent
(ii) The first part of the proof shows that $\lambda\(t,x\) = \Y(t,x)$. Recall that in Section \ref{sec:DMCT}, the optimal wealth process is constructed as follows:
\begin{equation}\label{eq:def of X_t^*}
X_t^* = \E\[Z_{t,T}I\(\lambda\(0,x_0\)\xi_t Z_{t,T} \)|\xi_t \].
\end{equation}
Also, we have
\begin{equation*}
\E\[Z_{t,T}I\(\lambda(t,x)Z_{t,T} \) \] = x,\quad x \in \D_U.
\end{equation*}
Since $Z_{t,T}$ is independent of $\xi_t$, we can write
\begin{equation*}
    \E\[Z_{t,T}I\(\lambda(t,x)Z_{t,T} \) \] = \E\[Z_{t,T}I\(\lambda(t,x)Z_{t,T} \) | \xi_t\] = x.
\end{equation*}
Note that $X_t^*$ is $\sigma\(\xi_t\)$-measurable. Hence, we can substitute $x = X_t^*$ to get
\begin{equation}\label{eq:Lagrange multiplier equality}
      X_t^* =\E\[Z_{t,T}I\(\lambda(t,X_t^*)Z_{t,T} \) | \xi_t\].
\end{equation}
Combining Equations \eqref{eq:def of X_t^*}-\eqref{eq:Lagrange multiplier equality}, we get
\begin{equation*}
\E\[Z_{t,T}\(I\(\lambda(0,x)\xi_t Z_{t,T} \)- I\(\lambda(t,X_t^*)Z_{t,T} \) \)|\xi_t \] = 0,\quad \text{almost surely}.
\end{equation*}
Both $\lambda(0,x)\xi_t$ and $\lambda(t,X_t^*)$ are $\sigma(\xi_t)$-measurable. Hence, it suffices to show that for any two constants $a, b > 0$, if 
\begin{equation*}
\E\[Z_{t,T}\(I\(aZ_{t,T}\)- I\(b Z_{t,T}\) \)\] = 0,
\end{equation*}
then we must have $a = b$ since $I$ is decreasing and non-constant. 
Consequently, we prove that $\lambda(0,x_0)\xi_t =\lambda(t,X_t^*) $ almost surely.
\end{proof}

\begin{proof}[Proof of Theorem \ref{thm-opt-value}]

  (i) From Theorem \ref{thm:Lagrange multiplier identity and homogeneity}, we have $\lambda(0, x_0) = \Y(0,x_0)$. Then it follows that $X_T^* = I(\lambda (0,x_0)\xi_T)$. From Equation \eqref{eq:def of X_t^*}, $X_t^*$ is given by
    \begin{equation*}
    X_t^* 
    = \E \[ Z_{t,T} I\( \lambda(0,x_0) \xi_t Z_{t,T} \) \mid \xi_t \].
    \end{equation*}
    On the other hand, we have
    \begin{equation*}
    i\( t, \lambda(0,x_0) \xi_t \) = i\(t, \lambda(t, X_t^*)\) = \E \[ Z_{t,T} I\( \lambda(0,x_0)\xi_t Z_{t,T} \)|\xi_t \] = X_t^*.
    \end{equation*}\\
(ii) As shown in Section \ref{sec:DMCT}, the optimal control $\{\boldsymbol{\pi}_t^*\}_{0\leq t\leq T}$ is given by
    \begin{equation*}
    \begin{aligned}
    \boldsymbol{\pi}_t^* &= -\boldsymbol{\sigma}^{-1} \boldsymbol{\theta} \xi_t \frac{\partial X_t^*\(\xi_t\)}{\partial \xi_t}\\
    &= -\boldsymbol{\sigma}^{-1} \boldsymbol{\theta} \xi_t \lambda(0,x_0) \left.\frac{\partial i\(t,y\)}{\partial y} \right|_{y = \lambda(0, x_0)\xi_t}\\
    &= -\boldsymbol{\sigma}^{-1} \boldsymbol{\theta}\lambda(t,X_t^*)\left. \frac{\partial i\(t,y\) }{\partial y}\right|_{y = \lambda(t, X_t^*)},
    \end{aligned}
    \end{equation*}
    proving the result.
\end{proof}

\section{Numerical Example and Economic Insight}\label{sec:example}
In this section, we present a numerical example to illustrate the economic insight of our main Theorem \ref{thm:Lagrange multiplier identity and homogeneity}.

We consider the non-concave and discontinuous utility $U: [0,\infty) \rightarrow \R$ given by
\begin{equation*}
    U(x)=\left\{
    \begin{aligned}
        &0, && x\in[0,1];\\
        &\log(x)+1, && x \in(1,\infty).
    \end{aligned}
    \right.
\end{equation*}
We can directly compute the Legendre-Fenchel transform \eqref{eq:Legendre-Fenchel transform} of this utility function:
\begin{equation}\label{eq:ex Legendre transfrom}
\begin{aligned}
    V(y) &= \left\{
    \begin{aligned}
    &-\log(y), && y \in (0, 1);\\
    &0, && y \in [1, \infty).
    \end{aligned}
    \right.\quad \quad
    &&I(y) = \left\{
    \begin{aligned}
    & y^{-1} , && y \in (0, 1);\\
    & 0, && y \in [1, \infty).
    \end{aligned}
    \right. \\
    U^{**}(x) &= \left\{
    \begin{aligned}
    & x, && x \in [0, 1];\\
    & \log(x) +1, && x \in (1, \infty).
    \end{aligned}  
    \right.\quad\quad 
    &&\Lambda(x) = \left\{
    \begin{aligned}
    & 1, && x \in [0,1];\\
    & x^{-1}, && x \in (1,\infty).
    \end{aligned}   
    \right.
\end{aligned}
\end{equation}
Define the function
\begin{equation*}
    d(t,y) := \hat{d}\(t, \frac{1}{y}\) = -\frac{1}{\|\btheta\|_2\sqrt{T-t}}\(\log\(\frac{1}{y}\)+\(r+\frac{\|\btheta\|_2^2}{2}\)(T-t) \).
\end{equation*}
For fixed $t \in [0,T)$ and $x \in [0,\infty)$, we can compute the value function: 
\begin{equation}\label{eq:ex u}
\begin{aligned}
    u(t,x) &= \E\[U\(I\(\Y(t,x)Z_{t,T} \) \) \]\\
    &=\E\[U^{**}\(I\(\Y(t,x)Z_{t,T} \) \) \]\\
    &= \E\[\(1-\log\(\Y(t,x)Z_{t,T} \) \)\id_{\{\Y(t,x) Z_{t,T}<1\}} \]\\
    &= \(1 -\Phi\(d\(t, \Y(t,x)\) \) \)\(1- \log\(\Y(t,x)\)+\(r + \frac{\|\btheta\|_2^2}{2}\)\(T-t\) \)\\
    &\quad +\|\btheta\|_2\sqrt{T-t}\Phi'\(d\(t, \Y(t,x)\) \).
\end{aligned}
\end{equation}
Next, we can compute the explicit form of $g(t,y)$: 
\begin{equation}\label{eq:ex g}
    g(t,y) = \E\[Z_{t,T}I\(yZ_{t,T} \) \]= \frac{1}{y}\p\left\{yZ_{t,T} \in (0,1) \right\}= \frac{1}{y}\(1-\Phi\(d(t,y) \) \).
\end{equation}
Then the partial derivative of $g(t,y)$ with respect to $y$ is 
\begin{equation}\label{eq:ex g deriv}
    \frac{\partial g(t,y)}{\partial y} = -\frac{1}{y^2}\(1-\Phi\(d(t,y) \)+\frac{1}{\|\btheta\|_2\sqrt{T-t}}\Phi'\(d(t,y)\) \).
\end{equation}
Similarly, we can directly compute the partial derivative of $u(t,x)$ with respect to $x$:
\begin{equation}\label{eq:ex u deriv}
\begin{aligned}
    \frac{\partial u(t,x) }{\partial x} &= -\frac{\partial \Y(t,x) }{\partial x} \(\Y(t,x)\)^{-1}\left\{1-\Phi\(d\(t,\Y(t,x)\) \)+ \frac{\Phi'\(d\(t,\Y(t,x)\) \)}{\|\btheta\|_2\sqrt{T-t}}\right.\\
    &\quad \left. \times\(1-\log\(\Y(t,x)\)+\(r + \frac{\|\btheta\|_2^2}{2}\)\(T-t\) \) + d\(t,\Y(t,x)\)\Phi'\(d\(t,\Y(t,x)\) \) \right\}\\
    &= -\frac{\partial \Y(t,x) }{\partial x} \(\Y(t,x)\)^{-1}\(1 - \Phi\(d\(t,\Y(t,x)\) \) + \frac{\Phi'\(d\(t,\Y(t,x)\) \)}{\|\btheta\|_2\sqrt{T-t}}\).
\end{aligned}
\end{equation}
Recall the fact that
\begin{equation}\label{eq:ex lambda deriv}
    \frac{\partial \Y(t,x) }{\partial x} = \frac{1}{\left.\frac{\partial g(t,y)}{\partial y}\right|_{y = \Y(t,x)} }.
\end{equation}
Substituting \eqref{eq:ex lambda deriv} into \eqref{eq:ex u deriv} and combining with \eqref{eq:ex g deriv}, we get
\begin{equation*}
\frac{\partial u(t,x) }{\partial x} = \(\Y(t,x)\)^2 \cdot \(\Y(t,x)\)^{-1} = \Y(t,x),
\end{equation*}
which validates the second equality in Equation \eqref{eq:first_equal} of Theorem \ref{thm:Lagrange multiplier identity and homogeneity}. To show the first equality in Equation \eqref{eq:first_equal}, we try to find $\lambda(t,x)$ by the definition in \eqref{def nu_t}. From Theorem \ref{prop of V}, we can compute $v(t,y)$ as follows:
\begin{equation*}
\begin{aligned}
    v(t,y) &= \E\[V\(yZ_{t,T}\) \]\\
    &= -\E\[\log\(yZ_{t,T}\)\id_{\left\{yZ_{t,T}\in \(0,1\) \right\}} \]\\
    &= \(1- \Phi\(d(t,y)\) \)\(\log\(\frac{1}{y}\)+\(r+\frac{\|\btheta\|_2^2}{2} \)\(T-t\) \)+\|\btheta\|_2\sqrt{T-t}\Phi'\(d(t,y)\).
\end{aligned}
\end{equation*}
Then the partial derivative of $v(t,y)$ with respect to $y$ is given by
\begin{equation}\label{eq:ex v deriv}
\begin{aligned}
    \frac{\partial v(t,y)}{\partial y} &= -\frac{1}{y\|\btheta\|_2\sqrt{T-t}}\Phi'\(d(t,y)\)\(\log\(\frac{1}{y}\)+\(r+\frac{\|\btheta\|_2^2}{2}\(T-t\) \) \)\\
    &\quad - \frac{1}{y}\(1-\Phi\(d(t,y)\) \)-\frac{1}{y}d(t,y)\Phi'\(d(t,y)\)\\
    &=-\frac{1}{y}\(1-\Phi\(d(t,y)\) \).
\end{aligned}
\end{equation}
From \eqref{def nu_t}, we see that provided the strict convexity of $v(t,y)$ in $y$, $\lambda(t,x)$ is the unique function satisfying
\begin{equation*}
     \left.-\frac{\partial v(t,y)}{\partial y}\right|_{y=\lambda(t,x)} = x,\quad x \in [0,\infty),
\end{equation*}
i.e., for a fixed $t\in[0,T)$, $\lambda(t,x)$ is the inverse function of $-\frac{\partial v(t,y)}{\partial y}$ in $y$. Comparing \eqref{eq:ex g} and \eqref{eq:ex v deriv} and recalling the definition of $\Y(t,x)$, we get $\Y(t,x) = \lambda(t,x)$. This validates Equation \eqref{eq:first_equal} in Theorem \ref{thm:Lagrange multiplier identity and homogeneity}.  
\begin{figure}[h!]
    \centering
    \includegraphics[width=0.7\linewidth]{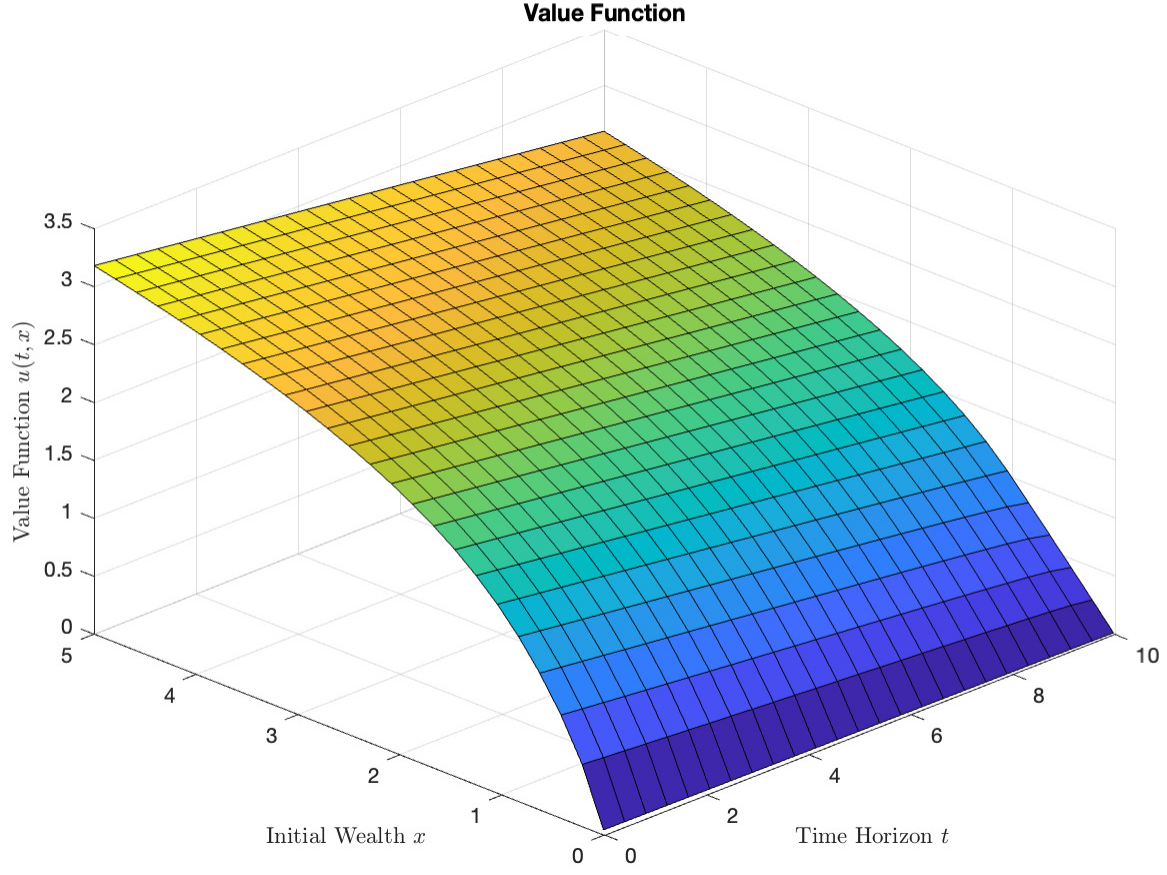}
    \caption{The value function $u(t,x)$ given by \eqref{eq:ex u}, where we let the market contain only one risky asset. The coefficients are defined as follows: $T=10$, $r = 0.05$, $\mu = 0.086$, $\sigma=0.3$.}
    \label{fig:value function}
\end{figure}
\begin{figure}[h!]
    \centering
    \includegraphics[width=0.7\linewidth]{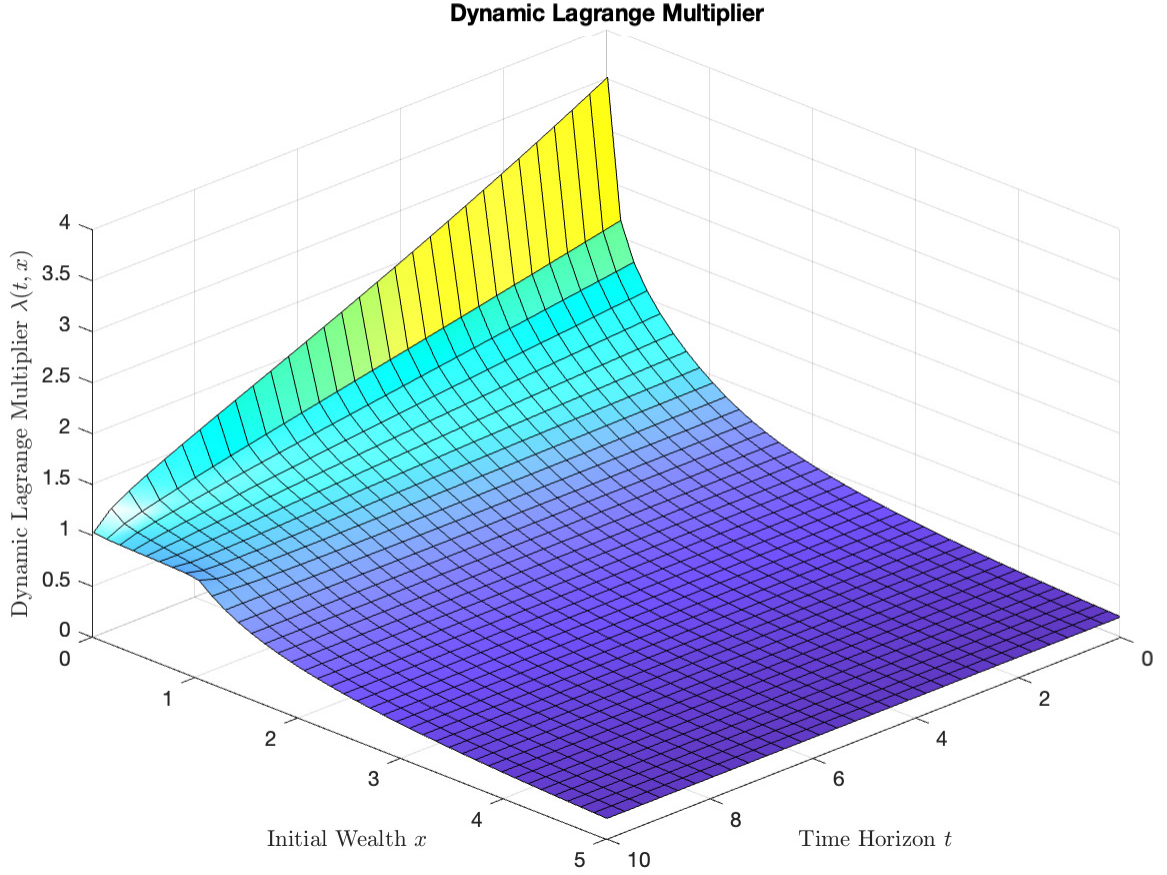}
    \caption{The dynamic Lagrange multiplier $\lambda(t,x)$, which is obtained by Legendre-Fenchel transform of $u(t,x)$ in \eqref{eq:ex u}. The coefficients are defined as follows: $T=10$, $r = 0.05$, $\mu = 0.086$, $\sigma=0.3$.}
    \label{fig:Lagrange multiplier}
\end{figure}

Finally, we present two figures to numerically illustrate the properties of $u(t,x)$ and $\lambda(t,x)$. As shown in Figure \ref{fig:value function}, for a fixed $t \in [0,T)$, the value function $u(t,x)$ is strictly concave in $x$ as shown in Lemma \ref{lem:concavity of value function}. Moreover, for a fixed $x$, the value function decreases as $t$ approaches $T$. This is reasonable since the investor has less chance to operate on his/her portfolio when it is closer to the terminal time. On the other hand, Figure \ref{fig:Lagrange multiplier} illustrates the evolution of the dynamic Lagrange multiplier $\lambda(t,x)$.
We see that when $t$ is far away from the terminal time $T$, the Lagrange multiplier $\lambda(t,x)$ is convex in $x$. This convexity fails to hold as $t$ approaches $T$, where $\lambda(t,x)$ approaches $\Lambda(x)$ in \eqref{eq:ex Legendre transfrom} simultaneously. It is because the non-concavity of the utility functions imposes more effects on the investment behaviors as it approaches the terminal time. This matches existing studies on non-concave utility maximization; see, e.g., \cite{HJ2007} and \cite{LL2020}. 
From an economic perspective, Figure \ref{fig:Lagrange multiplier} shows that for a fixed initial wealth level $x$, the shadow price $\lambda(t, x)$ decreases as $t$ approaches $T$. This reflects the diminishing marginal benefit of initial wealth (budget) and the decreasing value function over time. 

\vskip -0.2cm
\quad 

\noindent
{{\bf Acknowledgements.}
The authors are grateful to Zhichen Wang and other members of the research group on financial mathematics and risk management at CUHK-Shenzhen for their useful feedback and conversations. 
Y.L. acknowledges financial support from the National Natural Science Foundation of China (Grant No. 12401624), The Chinese University of Hong Kong (Shenzhen) University Development Fund (Grant No. UDF01003336), Guangdong Science and Technology Program (Grant No. 2024QN11X076) and Shenzhen Science and Technology Program (Grant No. RCBS20231211090814028, JCYJ20250604141203005, 2025TC0010) and is partly supported by the Guangdong Provincial Key Laboratory of Mathematical Foundations for Artificial Intelligence (Grant No. 2023B1212010001).  A.S. gratefully acknowledges support from the Natural Sciences and Engineering Research Council of Canada through grant RGPIN-2024-03761. Z.S. acknowledges financial support from the undergraduate research awards at The Chinese University of Hong Kong, Shenzhen. 
}


\newpage
{\textbf{Online 
Appendix - Auxiliary Proofs}
}
\begin{proof}[Proof of Lemma \ref{lem:enve properties}]
    (a) If there are $x<y$ such that $U^{**}(x)\geq U^{**}(y)$, then the concavity of $U^{**}$ and the monotonicity of $U$ imply that $U^{**}(z)\to-\infty$ or $U^{**}(z)\to C$ for some constant $C$ as $z\uparrow\infty$, in contradiction to the fact that $U(z)\to \infty$ as $z \to \infty$.

    (b) The inclusion $\text{\rm dom } U\subseteq \text{\rm dom } U^{**}$ follows from the fact that $U^{**}\ge U$. The converse inclusion will follow if we can show that $U^{**}(L)=-\infty$ if $U(L)=-\infty$. To this end, let us assume that $U(L)=-\infty$ but $U^{**}(L)=C>-\infty$. The upper semicontinuity of $U$ implies that there exists $\varepsilon>0$ such that $U(x)<C-1$ for all $x\in(L,L+\varepsilon]$. Thus, replacing $U^{**}$ on $[L,L+\varepsilon]$ with the straight line from $C-1$ to $U^{**}(L+\varepsilon)\ge U^{**}(L)=C$ produces a concave function that dominates $U$ and is dominated by $U^{**}$, in contradiction to the fact that $U^{**}$ is the concave envelope of $U$.

    (c) The upper semicontinuous concave function $U^{**}$ is continuous on its effective domain $\text{\rm dom } U^{**}$. Hence, $U-U^{**}$ is upper semicontinuous on $\text{\rm dom } U^{**}=\text{\rm dom } U$, and so $\{x\in \text{\rm dom } U\mid U^{**}(x)>U(x)\}$ is relatively open in $\text{\rm dom } U$. Since we always have $U^{**}(L)=U(L)$, the assertion follows. 

    (d) Suppose by way of contradiction that $U^{**}$ is not affine on $(a_k,b_k)$ with $a_k<b_k$.  Then there are $x,y\in (a_k,b_k)$ such that $x<y$ and $U^{**}((x+y)/2)>(U^{**}(x)+U^{**}(y))/2$.  Let $H$ be the affine function on $[x,y]$ that coincides with  $U^{**}$ at the endpoints $x,y$. Since  $U^{**}$ is continuous and $U$ is upper semicontinuous on $[x,y]$, the infimum of $U^{**}-U$ is attained on $[x,y]$ and strictly positive. Hence, there is $\varepsilon>0$ such that $K:=\varepsilon H+(1-\varepsilon)U^{**}$ is still strictly larger than $U$ on $[x,y]$. Let
    $\widetilde U:=K\mathbbmss{1}_{[x,y]}+U^{**}\mathbbmss{1}_{[x,y]^c}
    $. Then $\widetilde U$ is strictly smaller than $U^{**}$ and still concave since $\widetilde U'_+(x)\le (U^{**})'_+(x)$ and $\widetilde U'_-(y)\ge (U^{**})'_-(y)$. But this contradicts the fact that $U^{**}$ is the smallest concave function dominating $U$.
\end{proof}

\begin{proof}[Proof of Lemma \ref{lem:form of I}] To prove that $I$ is finite, 
    consider $y > 0$. Since $\lim_{x\uparrow\infty}(U^{**})'(x) = 0$, there exists $x_y \in \R$ such that $(U^{**})'(x) < y$ for all $x \geq x_y$. By definition, we have $I(y) \leq x_y < \infty$. Hence, $I(y)<\infty$. On the other hand, we always have  $I(y)\ge L>-\infty$.

    Now we prove \eqref{eq:I is maximizer}. 
   Since the function $U^{**}$ is concave, $I(\cdot)$ satisfies
    \begin{equation*}
        U^{**}\(I(y)\) - yI(y) = \sup_{x\in \dom U^{**}}\{U^{**}(x) - xy \} = \sup_{x\in \dom U}\{U^{**}(x) - xy \},
    \end{equation*}
    where the second equality comes from Lemma \ref{lem:enve properties}(b). Part (c) of that lemma states furthermore that the set $S=\{x\mid U^{**}(x) > U(x)\}$  can be represented as the union of a sequence $\{(a_k, b_k)\}_{k=1}^\infty$ of disjoint open intervals.
    Consider $y > 0$. If $y = (U^{**})'(x_{k_0})$ for some $x_{k_0} \in (a_{k_0}, b_{k_0})$, then we have $I(y) = a_{k_0} \notin S$, since $U^{**}$ is affine on $ (a_{k_0}, b_{k_0})$ by Lemma \ref{lem:enve properties} (d). On the other hand, it is clear that $I(y) \notin S$ if $y \notin (U^{**})'(S)$.
    This shows that $I(y) \notin S$, and so $U^{**}(I(y))=U(I(y))$ for all $y > 0$. Hence, we have
    \begin{equation*}
        U\(I(y)\) - yI(y) = U^{**}\(I(y)\) - yI(y) = \sup_{x\in \dom U}\{U^{**}(x) - xy \} \geq \sup_{x\in \dom U}\{U(x) - xy \}.
    \end{equation*}
    It remains to show that $I(y) \in \dom U$. This is trivial if $\dom U$ is closed. If $\dom U$ is open,  then the upper semicontinuity of $U$ and the fact that $L$ is finite imply that $U(x)\to-\infty$ as $x\downarrow L$. This yields  $U^{**}(x)\to-\infty$ and in turn $(U^{**})'(x)\to \infty$ as $x\downarrow L$. Hence, $I(y) > L$ and thus  $I(y)\in \dom U $ for all $y > 0$. This completes the proof. 
\end{proof}
\begin{proof}[Proof of Lemma \ref{lem:HJB equation}]
The proof is an extension of the proof of Theorem 4.5.2 in \cite{YZ1999}.
We begin by proving the dynamic programming principle (DPP), i.e., we are going to show that for $t\leq s\leq T$ and $x \in \D_U$, 
\begin{equation*}
    u(t,x) = \sup_{\boldsymbol{\pi} \in \mathcal{V}[t,T]}\E_{t,x}\[u\(s,X_s^{t,x,\boldsymbol{\pi}|_{[t,s]} }\)\],
\end{equation*}
where $\E_{t,x}[\cdot]:=\E\[\cdot|X_t=x \]$ and $\boldsymbol{\pi}|_{[t,s]}$ denotes the control process from $t$ to $s$. Since the wealth process is Markovian, we have
\begin{align*}
    u(t,x) &= \sup_{\boldsymbol{\pi}\in \mathcal{V}[t,T]}\E_{t,x}\[U\(X_T^{t,x,\bpi }\) \]\\
    &= \sup_{\boldsymbol{\pi}\in \mathcal{V}[t,T]}\E_{t,x}\[\left.\E\[U\(X_T^{t,x,\bpi}\)\right|\F_s \] \]\\
    &= \sup_{\boldsymbol{\pi}\in \mathcal{V}[t,T]}\E_{t,x}\[\left.\E\[U\(X_T^{s,X_s^{t,x,\bpi|_{[t,s]}} ,\bpi|_{[s,T]}}\)\right|X_s^{t,x,\bpi|_{[t,s]}} \] \]\\
    &\geq \sup_{\boldsymbol{\pi}|_{[t,s]}\in\mathcal{V}[t,s]}\E_{t,x}\[\sup_{\boldsymbol{\pi}|_{[s,T]} \in \mathcal{V}[s,T] } \E\[\left.U\(X_T^{s,X_s^{t,x,\bpi|_{[t,s]}},\bpi|_{[s,T]}}\)\right|X_s^{t,x,\bpi|_{[t,s]}}\]\]\\
    &=\sup_{\boldsymbol{\pi}|_{[t,s]}\in\mathcal{V}[t,s]}\E_{t,x}\[u\(s,X_s^{t,x,\bpi|_{[t,s]}}\) \]\\
    &=\sup_{\boldsymbol{\pi}\in\mathcal{V}[t,T]}\E_{t,x}\[u\(s,X_s^{t,x,\bpi}\) \].
\end{align*}
On the other hand, for an arbitrary control $\boldsymbol{\pi}'\in\mathcal{V}[t,T]$, we have 
\begin{equation*}
    u\(s,X_s^{t,x,\bpi'|_{[t,s]}}\) \geq \E\[\left.U\(X_T^{s,X_s^{t,x,\bpi|_{[t,s]}},\bpi'|_{[s,T]}}\)\right|X_s^{t,x,\bpi'|_{[t,s]}} \].
\end{equation*}
Taking expectation on both sides:
\begin{align*}
    \E_{t,x}\[u\(s,X_s^{t,x,\bpi'|_{[t,s]}} \) \] &\geq \E_{t,x}\[\left.\E\[U\(X_T^{s,X_s^{t,x,\bpi|_{[t,s]}},\bpi'|_{[s,T]}} \)\right|X_s^{t,x,\bpi'|_{[t,s]}} \] \]
    = \E_{t,x}\[U\(X_T^{t,x,\bpi'} \) \].
\end{align*}
Taking supremum on both sides, we get
\begin{equation*}
    \sup_{\bpi\in\mathcal{V}[t,T]}\E_{t,x}\[u\(s,X_s^{t,x,\bpi|_{[t,s]}} \)\]\geq \sup_{\bpi \in \mathcal{V}[t,T]}\E_{t,x}\[U\(X_T^{t,x,\bpi}\) \] = u(t,x).
\end{equation*}

Now, we show that $u(t,x)$ is a viscosity supersolution. Let $\varphi \in C^{1,2}\([0,T)\times \D_U\)$ be such that $u-\varphi$ attains a local minimum at $(t',x') \in [0,T)\times \D_U$ and $u(t',x') = \varphi(t',x')$.
By the definition of local minimum, there exist $h>0$ and $\epsilon>0$ such that $t'+h<T$ and 
$u(t,x) \geq \varphi(t,x)$ for all $(t,x) \in [t',t'+h]\times B_\epsilon(x')$, where $B_\epsilon\(x'\):= \{x\in\D_U: |x-x'|<\epsilon \}$. Fixing a constant control $\bpi' \in \R^d$, let $\{X_t^{t',x',\bpi'}\}_{t'\leq t\leq T}$ be the state trajectory under the constant control.
Then from DPP and the linearity of the SDE of $\{X_t^{t',x',\bpi'}\}_{t'\leq t\leq T}$, there exists $h >0$ small enough such that
\begin{align*}
    0&\geq \frac{1}{h}\E_{t',x'}\[\(u\(t',x'\)-\varphi\(t',x'\) \) - \(u\(t'+h, X_{t'+h}^{t',x',\bpi'}\)-\varphi\(t'+h,X_{t'+h}^{t',x',\bpi'}\) \) \]\\
    &\geq \frac{1}{h}\E_{t',x'}\[\varphi\(t'+h,X_{t'+h}^{t',x',\bpi'}\)-\varphi\(t',x'\) \].
\end{align*}
By Itô's formula, taking $h\downarrow 0$, we have
\begin{equation}\label{eq:viscosity supersolution}
    -\frac{\partial \varphi\(t',x'\)}{\partial t} - \L\(t',x',\varphi; \bpi'\) \geq 0.
\end{equation}
Taking $\sup_{\bpi' \in \R_d}$ on $\L\(t',x',\varphi; \bpi'\)$, we get that $u$ is a viscosity supersolution.

On the other hand, suppose that $u-\varphi$ attains a local maximum at $(t',x') \in [0,T) \times \D_U$. From DPP, for any $\epsilon >0$ and $h > 0$ small enough, there exists $\bpi^{\epsilon, h} \in \mathcal{V}[t',T]$ such that 
\begin{align*}
    0&\leq \E_{t',x'}\[\(u\(t',x'\)-\varphi\(t',x'\) \) - \(u\(t'+h, X_{t'+h}^{t',x',\bpi^{\epsilon,h}}\)-\varphi\(t'+h,X_{t'+h}^{t',x',\bpi^{\epsilon,h}}\)\) \]\\
    &\leq \epsilon h + \E\[\varphi\(t'+h,X_{t'+h}^{t',x',\bpi^{\epsilon,h}}\) - \varphi\(t',x'\)  \].
\end{align*}
Dividing both sides by $h$, we get
\begin{align*}
    -\epsilon \leq \frac{1}{h}\E\[\varphi\(t'+h,X_{t'+h}^{t',x',\bpi^{\epsilon,h}}\) - \varphi\(t',x'\)  \].
\end{align*}
Taking $h\downarrow 0$ and since $\epsilon$ is arbitrary, we get
\begin{equation}\label{eq:viscosity subsolution}
    -\frac{\partial \varphi\(t',x'\)}{\partial t} - \L\(t',x',\varphi; \bpi'\) \leq 0.
\end{equation}
Combining Equations \eqref{eq:viscosity supersolution}-\eqref{eq:viscosity subsolution}, we prove $u(t,x)$ is a viscosity solution of Equation \eqref{eq:classic HJB equation}.

\end{proof}
\end{document}